\documentclass[10pt,twoside]{article}
\usepackage{mathrsfs}
\usepackage{amsmath}
\usepackage{amssymb}
\usepackage{fancyhdr}
\usepackage{latexsym}
\usepackage{bbding}
\usepackage{mathrsfs}
\usepackage{wasysym}
\usepackage{cite}

\usepackage{multicol,graphics}

\setcounter{MaxMatrixCols}{10}
\newtheorem{theorem}{Theorem}[section]
\newtheorem{lemma}[theorem]{Lemma}
\newtheorem{corollary}[theorem]{Corollary}

\newtheorem{remark}[theorem]{Remark}

\numberwithin{equation}{section}
\newenvironment{proof}[1][Proof]{\noindent\textbf{#1.} }{\hfill $\Box$}
\allowdisplaybreaks

 \makeatletter\setlength{\textwidth}{16.80cm}
  \setlength{\oddsidemargin}{-0.40cm}
\setlength{\evensidemargin}{-0.40cm} \setlength{\textheight}{25.00cm}
\setlength{\topmargin}{-1.93cm}

\begin{document}
\title{{On the temporal decay of solutions to the two-dimensional  nematic liquid crystal flows}
\thanks{This work is partially supported by the National Natural Science Foundation of China (11326155,
11401202), and by the Scientific Research Fund of Hunan Provincial
Education Department (14B117).}
}
\author{
{\small  
 Qiao Liu$^{1,2}$ \thanks{\text{E-mail address}: liuqao2005@163.com.
}}
\medskip
\\
{\small  $^{1}$\textit{Institute of Applied Physics and Computational Mathematics, Beijing, 100088, P. R. China}}\\
{\small  $^{2}$\textit{Department of Mathematical, Hunan Normal University, Changsha, Hunan 410081, P. R. China}}\\
}
\date{}
\maketitle

\begin{abstract}
We consider the temporal decay estimates for weak solutions to the two-dimensional nematic liquid crystal flows, and we show that
the energy norm of a global weak solution has non-uniform decay
\begin{align*}
\|u(t)\|_{L^{2}}+\|\nabla d(t)\|_{L^{2}}\rightarrow 0\quad \text{ as  } t\rightarrow \infty,
\end{align*}
under suitable conditions on the initial data. We also show the exact rate of the decay (uniform decay) of the energy norm  of the global weak solution.

\medskip

\textbf{Keywords}: nematic liquid crystal flow; temporal decay; Fourier splitting method

\textbf{2010 AMS Subject Classification}: 76A15, 35B65, 35Q35
\end{abstract}

\section{Introduction}\label{Int}

\noindent

In this paper, we are interested in the large-time behavior of solutions to the following hydrodynamic system modeling the flow of  nematic liquid crystal materials in
two dimensions (see, e.g. \cite{L,LLW}):
\begin{align}
   \label{eq1.1}
&{\partial_{t}}u-\nu\Delta u +(u\cdot\nabla)u+\nabla{P}=-\lambda\nabla\cdot(\nabla d \odot\nabla d),\\
   \label{eq1.2}
&\partial_{t}d+(u\cdot\nabla)d=\gamma(\Delta d+|\nabla d|^{2}d),\\
   \label{eq1.3}
&\quad\quad\nabla\cdot u=0,\quad\quad |d|=1,\\
   \label{eq1.4}
&(u,d)|_{t=0}=(u_{0},d_{0}), \quad\quad |d_{0}|=1,
\end{align}
where $u(x,t):\mathbb{R}^{2}\times (0,+\infty)\rightarrow
\mathbb{R}^{2}$ is the unknown velocity field of the flow,
$P(x,t):\mathbb{R}^{2}\times (0,+\infty)\rightarrow \mathbb{R}$ is
the scalar pressure and $d:\mathbb{R}^{2}\times
(0,+\infty)\rightarrow \mathbb{S}^{1}$, the unit sphere in
$\mathbb{R}^{2}$, is the unknown (averaged) macroscopic/continuum
molecule orientation of the nematic liquid crystal flow, $\nabla
\cdot u=0$ represents the incompressible condition, $(u_{0},d_{0})$
is a given initial data with $\nabla \cdot u_{0}=0$ in distribution
sense, and  $\nu$, $\lambda$ and $\gamma$ are positive numbers
associated to the properties of the material: $\nu$ is the kinematic
viscosity, $\lambda$ is the competition between kinetic energy and
potential energy, and $\gamma$ is the microscopic elastic relaxation
time for the molecular orientation field. The notation $\nabla
d\odot\nabla d$ denotes the $2\times 2$ matrix whose $(i,j)$-th
entry is given by $\partial_{i}d\cdot
\partial_{j}d$ ($1\leq i,j\leq 2$). Since the concrete values of $\nu$, $\lambda$ and $\gamma$ do
not play a special role in our discussion, for simplicity, we assume
that they all equal to one throughout this paper.

The nematic liquid crystal flow \eqref{eq1.1}--\eqref{eq1.4} is a
simplified version of the Ericksen-Leslie model \cite{ER,LE}, but it
still retains most of the interesting mathematical properties.
Mathematically, system \eqref{eq1.1}--\eqref{eq1.4} is a strongly
coupled system between the incompressible Navier-Stokes (NS) equations
(the case $d\equiv \overline{d} _{0}$ ($\overline{d} _{0}$ is a constant vector in $\mathbb{S}^{1}$), e.g., \cite{PG,Le}) and the
transported heat flows of harmonic map (the case $u\equiv 0$, see
e.g., \cite{CDY,CS,MS,JL,W}), and thus, its mathematical analysis is
full of challenges.

 To make a clearer introduction to the results of the present paper, let us first concentrate on  the  NS equations.  Whether or not weak solutions decay to zero in $L^{2}$ as time
tends to infinity was posed by Leray in his pioneering paper \cite{Le}. Algebraic decay rates for the asymptotic behavior of solutions to the NS equations were first obtained by Schonbek \cite{S1}, using the method of Fourier splitting method, the author proved that there exists a weak solution to the n-dimensional ($n\geq3$)  NS equations in with initial data in $L^{1}\cap L^{2}$, satisfying
\begin{align*}
\|u(t)\|_{L^{2}}\leq C(1+t)^{-\frac{n}{2}+1},
\end{align*}
where the decay constant $C$ depends only on the $L^{1}$ and $L^{2}$ norms of the initial data. The Fourier splitting method was then extended by Schonbek \cite{S2} (see also Kajikiya and Miyakawa \cite{KM}) and it was proved that the decay rate of weak solutions to the three dimensional NS equations with initial data in $L^{p}\cap L^{2}$ with $1\leq p<2$ is same as that of the heat equation, i.e.,
\begin{align*}
\|u(t)\|_{L^{2}}\leq C(1+t)^{-\frac{3}{4}\left(\frac{2}{p}-1\right)},
\end{align*}
where the decay constant $C$ depends only on the $L^{p}$ and $L^{2}$ norms of the initial data.  Later, Zhang \cite{Z} established  that similar results of \cite{S1} still holds when the space dimension is two, and obtained that
\begin{align*}
\|u(t)\|_{L^{2}}\leq C(1+t)^{-\frac{1}{2}}.
\end{align*}
For more results on the decay of solutions to  the NS equations, we refer the
readers to \cite{S3,SW,ORS} and the references therein.

During the past several decades, there have been
many researches on system \eqref{eq1.1}--\eqref{eq1.4}, see, for
example,
\cite{JHW,HMC,HW1,LLZ,JT,L,LLW,LL1,LL2,LW,LW1,JL,LD,W,WD} and
the references therein. In what follows, we only briefly recall
some related results. In papers of Lin, Lin and Wang \cite{LLW} and
Hong \cite{HMC}, the authors proved that there exists global
Leray-Hopf type weak solutions to \eqref{eq1.1}--\eqref{eq1.4} with
suitable boundary condition in dimension two, and established that
the solutions are smooth away from at most finitely many singular
times which is similar as that for the heat flows of harmonic maps
(cf. \cite{CDY,CS,MS}). When the space dimension is three, Lin and Wang \cite{LW1} established the existence of global weak solutions very recently when the initial orientation $d_{0}$ maps to the up-hemisphere $\mathbb{S}^{2}_{+}$.
In \cite{W}, Wang proved the global existence of strong solution for rough initial data with sufficiently small $BMO^{-1}\times BMO$ norm, moreover,
 the global strong solution enjoys the decay estimate
 \begin{align*}
 \|u(t)\|_{L^{\infty}}+\|\nabla d(t)\|_{L^{\infty}} \leq C t^{-\frac{1}{2}}, \text{  for  } t>0.
 \end{align*}
 If the
qualification $|d|=1$ is deleted, more precisely, if the Dirichlet
energy
\begin{align*}
\frac{1}{2}\int_{\mathbb{R}^{n}}|\nabla d|^{2}\text{d}x\quad\text{
for  } d:\mathbb{R}^{n}\rightarrow \mathbb{S}^{2}
\end{align*}
 is replaced by the Ginzburg-Landau energy
\begin{align*}
\int_{\mathbb{R}^{n}}\left(\frac{1}{2}|\nabla
d|^{2}+\frac{(1-|d|^{2})^{2}}{4\varepsilon^{2}}\right)\text{d}x
\quad \text{ for } d:\mathbb{R}^{n}\rightarrow \mathbb{R}^{n},
\varepsilon>0.
\end{align*}
Lin and Liu \cite{LL1} proved the local classical solutions and the
global existence of weak solutions to this approximate system in dimensions two and three, and
for any fixed $\varepsilon$, they also obtained the existence and
uniqueness of global classical solution either in dimension two or
dimension three for large fluid viscosity $\nu$.  Moreover, a preliminary analysis of the asymptotic
behavior of global classical solution was also given in \cite{LL1}. Later in \cite{LL2}, Lin and Liu
proved partial regularity of the so-called suitable weak solutions in dimension three. In \cite{Wu}, by using the ${\L}$ojasiewicz-Simon approach method, Wu showed the uniqueness of asymptotic
limit of global classical solutions to the approximate liquid crystal flow, and  provided an estimate on the  convergence rate. More precisely,
it is show that under suitable initial conditions,  there exists unique classical solution to the approximate liquid crystal flow has the property
\begin{align*}
\lim_{t\rightarrow\infty}(\|u(t)\|_{H^{1}}+\|d(t)-d_{\infty}\|_{H^{2}})=0
\end{align*}
where $d_{\infty}$ is the steady state and satisfies  $-\Delta d_{\infty}+\frac{1}{\varepsilon^{2}}(|d_{\infty}|^{2}-1)d_{\infty}=0$.
Moreover, there exists a constant $\theta\in(0,\frac{1}{2})$ such that
\begin{align*}
\|u(t)\|_{H^{1}}+\|d(t)-d_{\infty}\|_{H^{2}}\leq C(1+t)^{-\frac{\theta}{1-2\theta}},
\end{align*}
where the  constant $C$ depends only on the initial data and $d_{\infty}$.

In this paper, motivated by the works cited above on the NS equations and nematic liquid crystal flows,  we study large-time behavior of solutions to the Cauchy problem of the two-dimensional nematic liquid crystal flow and establish temporal decay estimates for the solutions.
Our results, provide a mathematically rigorous basis to explain the decay of energy in the nematic liquid crystal flow, which  can be listed as follows.
\medskip

\begin{theorem}\label{thm1.1}
Let  $u_{0}\in  L^{2}(\mathbb{R}^{2})$ with
$\nabla\cdot u_{0}=0$, and $d_{0}\in
H^{1}(\mathbb{R}^{2};\mathbb{S}^{1})$ with $|d_{0}|=1$
and $d_{02}\geq \varepsilon_{0}>0$. Assume that
 $u\in L^{\infty}((0,\infty);L^{2}(\mathbb{R}^{2}))\cap L^{2}((0,\infty);H^{1}(\mathbb{R}^{2}))$
and $d\in  L^{\infty}((0,\infty);H^{1}(\mathbb{R}^{2},\mathbb{S}^{1}))$ is a global
weak solution to system \eqref{eq1.1}--\eqref{eq1.4}, then we have
\begin{align*}
\lim_{t\rightarrow \infty}\left(\|u(t)\|_{L^{2}}+\|\nabla d(t)\|_{L^{2}}\right)
=0.
\end{align*}
Here, $d_{02}$ denotes the second component of the initial orientation $d_{0}$.
\end{theorem}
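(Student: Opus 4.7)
The plan is to reduce the system to a coupled Navier--Stokes/passive scalar model via an angle lifting that is made possible by the hypothesis $d_{02}\ge\varepsilon_{0}>0$, and then to apply Schonbek's Fourier splitting method with an arbitrarily slow weight, in order to obtain non-uniform decay.

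First I would use $d_{02}\ge\varepsilon_{0}>0$ to lift $d_{0}=(\sin\theta_{0},\cos\theta_{0})$ with $\theta_{0}=\arcsin d_{01}\in L^{\infty}(\mathbb{R}^{2})$, $\|\theta_{0}\|_{L^{\infty}}\le\arccos\varepsilon_{0}<\pi/2$, and $\|\nabla\theta_{0}\|_{L^{2}}=\|\nabla d_{0}\|_{L^{2}}$. A short calculation using $|d|=1$ shows that \eqref{eq1.2} becomes the scalar convection--diffusion equation
\begin{equation*}
\partial_{t}\theta+(u\cdot\nabla)\theta=\Delta\theta,
\end{equation*}
and $\nabla d\odot\nabla d=\nabla\theta\otimes\nabla\theta$. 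The maximum principle for this linear equation, advected by a divergence-free field, preserves $\|\theta(t)\|_{L^{\infty}}<\pi/2$, so the lifting extends globally in time and $d_{2}(t)\ge\varepsilon_{0}$. Writing $v=\nabla\theta\in L^{2}$ and carrying out the natural energy computation (multiplying \eqref{eq1.1} by $u$ and the $\theta$-equation by $-\Delta\theta$), the two Ericksen cross terms cancel exactly and yield
\begin{equation*}
\frac{d}{dt}\big(\|u\|_{L^{2}}^{2}+\|v\|_{L^{2}}^{2}\big)+2\big(\|\nabla u\|_{L^{2}}^{2}+\|\nabla v\|_{L^{2}}^{2}\big)=0.
\end{equation*}

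Next I would apply Fourier splitting. Setting $E(t)=\|u\|_{L^{2}}^{2}+\|v\|_{L^{2}}^{2}$ and $B(t)=\{\xi:|\xi|\le g(t)\}$ with $g(t)^{2}=\alpha/(1+t)$ for a small parameter $\alpha>0$ to be sent to zero, Plancherel and $|\xi|^{2}\ge g(t)^{2}$ on $B(t)^{c}$ give
\begin{equation*}
\frac{d}{dt}E(t)+2g(t)^{2}E(t)\le 2g(t)^{2}\!\int_{B(t)}\!\big(|\hat u|^{2}+|\hat v|^{2}\big)\,d\xi.
\end{equation*}
After Leray projection the $u$-equation has nonlinearity $-\mathbb{P}\nabla\cdot(u\otimes u+v\otimes v)$, while the $v$-equation reads $\partial_{t}v-\Delta v=-\nabla(u\cdot v)$, so all nonlinear terms are derivatives of $L^{1}$ quantities of norm at most $E(0)$. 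Duhamel in Fourier then yields the pointwise bound $|\hat u(\xi,t)|+|\hat v(\xi,t)|\le|\hat u_{0}(\xi)|+|\hat v_{0}(\xi)|+C|\xi|E(0)t$. Integrating over $B(t)$ and using $\int_{B(t)}|\xi|^{2}d\xi\le Cg(t)^{4}$ together with $t^{2}g(t)^{4}\le\alpha^{2}$, I obtain
\begin{equation*}
\int_{B(t)}\!\big(|\hat u|^{2}+|\hat v|^{2}\big)\,d\xi\le 2\!\int_{B(t)}\!\big(|\hat u_{0}|^{2}+|\hat v_{0}|^{2}\big)\,d\xi+C\alpha^{2}E(0)^{2},
\end{equation*}
whose first term tends to zero as $t\to\infty$ because $\hat u_{0},\hat v_{0}\in L^{2}$ and $|B(t)|\to 0$.

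Finally, multiplying the splitting inequality by the integrating factor $(1+t)^{2\alpha}$, splitting the resulting time integral at a threshold past which the low-frequency term is already close to its limsup, and applying a Ces\`aro-type argument gives $\limsup_{t\to\infty}E(t)\le C\alpha^{2}E(0)^{2}$. Since $\alpha>0$ is arbitrary, letting $\alpha\downarrow 0$ forces $E(t)\to 0$; and because $\|\nabla d(t)\|_{L^{2}}=\|v(t)\|_{L^{2}}$, the stated decay follows. The main obstacle will be the first step: rigorously justifying the lifting $d\leftrightarrow(\sin\theta,\cos\theta)$, the scalar equation for $\theta$, and the $L^{\infty}$ maximum principle with only $u\in L^{\infty}_{t}L^{2}_{x}\cap L^{2}_{t}H^{1}_{x}$ and $d\in L^{\infty}_{t}H^{1}_{x}$. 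I would deal with this by approximating the weak solution (via mollification or the Galerkin sequence used in its construction), deriving the Fourier splitting identities at the approximate level, and passing to the limit using the stability of the $L^{\infty}$ bound on $\theta$ and of the pointwise Duhamel Fourier estimates, both of which survive such approximations.
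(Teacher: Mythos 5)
Your argument is essentially correct, but it follows a genuinely different route from the paper. The paper keeps the director $d$ as an $\mathbb{S}^1$-valued map, invokes the Lei--Li--Zhang rigidity theorem (Theorem \ref{thm2.1}, made applicable by propagating $d_2\ge\varepsilon_0$ via the maximum principle) to upgrade the natural dissipation $\int_0^\infty\|\Delta d+|\nabla d|^2d\|_{L^2}^2\,d\tau<\infty$ to $\int_0^\infty(\|\nabla u\|_{L^2}^2+\|\Delta d\|_{L^2}^2)\,d\tau<\infty$, and then splits the energy into a low-frequency part (treated with the heat-kernel weight $e^{-|\xi|^2t}$ through the generalized energy inequalities of Corollary \ref{cor2.3}) and a high-frequency part (treated with Lemma \ref{lem2.4}, $E(t)=(1+t)^k$, $k>2$, and Fourier splitting). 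You instead use $d_{02}\ge\varepsilon_0$ to lift $d=(\sin\theta,\cos\theta)$, which linearizes the director equation into $\partial_t\theta+u\cdot\nabla\theta=\Delta\theta$ and turns the Ericksen stress into $\nabla\theta\otimes\nabla\theta$; this bypasses the rigidity theorem entirely, since $\|\Delta d+|\nabla d|^2d\|_{L^2}=\|\Delta\theta\|_{L^2}$ and the energy identity already yields full dissipation for $v=\nabla\theta$. Your second half is in effect the machinery the paper reserves for Theorem \ref{thm1.2} (the Duhamel pointwise bound of Lemma \ref{lem3.1} plus Fourier splitting), run with the arbitrarily slow weight $(1+t)^{2\alpha}$ and a Ces\`aro/$\alpha\downarrow0$ limit to extract the qualitative decay without any $L^p$, $p<2$, hypothesis on the data. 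What your approach buys is a more self-contained and elementary proof (no external rigidity theorem, a single splitting instead of a low/high decomposition); what the paper's approach buys is that it stays with the geometric unknown $d$ and its generalized energy inequalities, which is the framework it also needs elsewhere. Two points deserve more care in your write-up: (i) the rigorous justification of the lifting, of the weak formulation of the $\theta$-equation, and of the $L^\infty$ maximum principle at the regularity $u\in L^\infty_tL^2_x\cap L^2_tH^1_x$, $d\in L^\infty_tH^1_x$ --- you flag this, and it is at exactly the same level of informality as the paper's own step \eqref{eq2.9}; and (ii) the verification that the constant in $\limsup_{t\to\infty}E(t)\le C\alpha^2E(0)^2$ is independent of $\alpha$, which does hold because $t^2g(t)^4\le\alpha^2$ and $\int_{B(t)}|\xi|^2\,d\xi\le Cg(t)^4$ carry all the $\alpha$-dependence explicitly.
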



\begin{theorem}\label{thm1.2}
\ Let $1\leq p<2$, for any initial data
$u_{0}\in L^{p}(\mathbb{R}^{2})\cap L^{2}(\mathbb{R}^{2})$ with
$\nabla\cdot u_{0}=0$, and $d_{0}\in
H^{1}(\mathbb{R}^{2};\mathbb{S}^{1})$ with $\nabla d_{0}\in L^{p}(\mathbb{R}^{2})$, $|d_{0}|=1$
and $d_{02}\geq \varepsilon_{0}>0$. Assume that  $u\in L^{\infty}((0,\infty);L^{2}(\mathbb{R}^{2}))\cap L^{2}((0,\infty);H^{1}(\mathbb{R}^{2}))$
and $d\in  L^{\infty}((0,\infty);H^{1}(\mathbb{R}^{2},\mathbb{S}^{1}))$ is a global weak solution to system \eqref{eq1.1}--\eqref{eq1.4}, then we have
\begin{align*}
\|u(t)\|_{L^{2}}+\|\nabla d(t)\|_{L^{2}}\leq C(1+t)^{-\frac{1}{2}\left(\frac{2}{p}-1\right)},
\end{align*}
where the constant $C$ depends on $\varepsilon_{0}$, and the $L^{p}$ and $L^{2}$ norms of the initial data.
\end{theorem}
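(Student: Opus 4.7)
I would adapt the Schonbek Fourier-splitting method to the coupled system, with the non-uniform decay of Theorem~\ref{thm1.1} serving as the seed for a bootstrap in the decay exponent. Testing \eqref{eq1.1} against $u$ and \eqref{eq1.2} against $-(\Delta d+|\nabla d|^{2}d)$ and using $\nabla\cdot u=0$ together with the constraint $|d|=1$ produces the basic energy identity
\[
\tfrac{1}{2}\tfrac{d}{dt}E(t)+\|\nabla u\|_{L^{2}}^{2}+\|\Delta d+|\nabla d|^{2}d\|_{L^{2}}^{2}=0,\qquad E(t):=\|u\|_{L^{2}}^{2}+\|\nabla d\|_{L^{2}}^{2}.
\]
The algebraic identity $\|\Delta d+|\nabla d|^{2}d\|_{L^{2}}^{2}=\|\Delta d\|_{L^{2}}^{2}-\|\nabla d\|_{L^{4}}^{4}$ (following from $d\cdot\Delta d=-|\nabla d|^{2}$) combined with the two-dimensional Gagliardo-Nirenberg bound $\|\nabla d\|_{L^{4}}^{4}\le C\|\nabla d\|_{L^{2}}^{2}\|\Delta d\|_{L^{2}}^{2}$ permits absorbing the quartic term provided $E(t)$ is small, which holds for $t\ge T_{0}$ by Theorem~\ref{thm1.1} (whose hypothesis $d_{02}\ge\varepsilon_{0}>0$ is assumed here as well). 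Thus for $t\ge T_{0}$ one has $\frac{dE}{dt}+2\|\nabla u\|_{L^{2}}^{2}+\|\Delta d\|_{L^{2}}^{2}\le 0$.

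Setting $g(t)^{2}=\alpha/(1+t)$ and $B(t)=\{|\xi|\le g(t)\}$ for a parameter $\alpha>0$ to be chosen, the standard splittings $\|\nabla u\|_{L^{2}}^{2}\ge g^{2}(\|u\|_{L^{2}}^{2}-\int_{B}|\hat u|^{2}d\xi)$ and $\|\Delta d\|_{L^{2}}^{2}\ge g^{2}(\|\nabla d\|_{L^{2}}^{2}-\int_{B}|\widehat{\nabla d}|^{2}d\xi)$ yield the differential inequality
\[
\tfrac{d}{dt}\bigl[(1+t)^{\alpha}E(t)\bigr]\le C\alpha(1+t)^{\alpha-1}\int_{B(t)}\bigl(|\hat u(\xi,t)|^{2}+|\widehat{\nabla d}(\xi,t)|^{2}\bigr)d\xi.
\]
The low-frequency integral is then estimated via the Duhamel representations for $\hat u$ and $\widehat{\nabla d}$. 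Each of the nonlinear contributions $(u\cdot\nabla)u$, $\nabla\cdot(\nabla d\odot\nabla d)$, $(u\cdot\nabla)d$, and $|\nabla d|^{2}d$ has $L^{1}$-norm bounded by $CE(s)$ (using $|d|\le 1$), which produces the pointwise bound $|\hat u(\xi,t)|+|\widehat{\nabla d}(\xi,t)|\le e^{-|\xi|^{2}t}(|\hat u_{0}(\xi)|+|\widehat{\nabla d_{0}}(\xi)|)+C|\xi|\int_{0}^{t}E(s)\,ds$. The Hausdorff-Young inequality applied to $u_{0},\nabla d_{0}\in L^{p}$ then gives $\int_{B(t)}(|\hat u_{0}|^{2}+|\widehat{\nabla d_{0}}|^{2})d\xi\le C(1+t)^{-(2/p-1)}$, while the Duhamel part contributes at most $Cg(t)^{4}(\int_{0}^{t}E\,ds)^{2}\le C(1+t)^{-2}(\int_{0}^{t}E\,ds)^{2}$.

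Integrating yields a bound on $(1+t)^{\alpha}E(t)$ in which the target exponent $\alpha=2/p-1$ is reached by a bootstrap: if $E(t)\le M(1+t)^{-\beta}$ with $\beta\in(0,1)$, then $\int_{0}^{s}E\,dr\le C(1+s)^{1-\beta}$ and the nonlinear contribution after integration grows like $(1+t)^{\alpha-2\beta}$, feeding back into a refined bound $E(t)\le C(1+t)^{-\min(2\beta,\,2/p-1)}$; doubling $\beta$ finitely many times then reaches the optimal rate $2/p-1$. The main obstacle is \emph{initiating} the bootstrap, since Theorem~\ref{thm1.1} only provides $E(t)\to 0$ without a rate. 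I expect to handle this by exploiting the smallness of $E$ on $[T_{0},\infty)$ to first extract an arbitrarily small positive seed exponent $\beta_{0}>0$ (by choosing $\alpha=\beta_{0}$ small enough that the right-hand side remains bounded even with the trivial estimate $\int_{0}^{s}E\le C s$ after $t\ge T_{0}$), after which the doubling iteration takes over. The borderline logarithmic loss at the endpoint $\alpha=2/p-1$ is absorbed by choosing $\alpha$ slightly smaller and letting it tend to the optimum.
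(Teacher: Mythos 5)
Your overall architecture matches the paper's: the energy law plus a coercivity estimate for $\|\Delta d+|\nabla d|^{2}d\|_{L^{2}}$, Fourier splitting with $g(t)^{2}\sim(1+t)^{-1}$, the Duhamel pointwise bound $|\widehat{u}|+|\widehat{\nabla d}|\leq|\widehat{u}_{0}|+|\widehat{\nabla d}_{0}|+C|\xi|\int_{0}^{t}E\,ds$ (the paper's Lemma \ref{lem3.1}), and Hausdorff--Young for the low frequencies of the initial data (Lemma \ref{lem3.2}); your coercivity via Ladyzhenskaya plus smallness of $\|\nabla d\|_{L^{2}}$ for large $t$ is a workable substitute for the rigidity Theorem \ref{thm2.1} that the paper invokes. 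The genuine gap is the initialization of your bootstrap, which you correctly flag as the main obstacle but do not resolve. With only the trivial bound $\int_{0}^{t}E\,ds\leq Ct$ (or $\leq C_{T_{0}}+\epsilon t$ using Theorem \ref{thm1.1}), the nonlinear low-frequency contribution $g(t)^{4}\bigl(\int_{0}^{t}E\,ds\bigr)^{2}$ is merely $O(1)$ (resp.\ $O(\epsilon^{2})$ plus a decaying remainder); integrating $\alpha(1+t)^{\alpha-1}\cdot O(1)$ and dividing by $(1+t)^{\alpha}$ returns only $E(t)\leq C$ (resp.\ $E(t)\leq C\epsilon^{2}+o(1)$) for \emph{every} choice of $\alpha>0$, however small. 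No positive seed exponent $\beta_{0}$ is produced: the doubling map $\beta\mapsto\min(2\beta,\tfrac{2}{p}-1)$ has $\beta=0$ as a fixed point, and your iteration never leaves it. Shrinking $\alpha$ cannot help, because the obstruction is that the right-hand side of the differential inequality does not decay at all, not that the temporal weight is too strong.

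The paper's way around this is its Lemma \ref{lem3.3}: a preliminary \emph{logarithmic} decay $E(t)\leq C[\ln(e+t)]^{-2}$, obtained by running the Fourier splitting with the logarithmically corrected radius $g(t)^{2}=[\overline{\omega}(e+t)\ln(e+t)]^{-1}$ and the slowly growing weight $h(t)=[\ln(e+t)]^{2}$; with this choice the trivial bound on $\int_{0}^{t}E\,ds$ already yields $t^{2}g(t)^{4}\leq C[\ln(e+t)]^{-2}$, i.e.\ a gain appears for free from the cut-off rather than from any a priori rate on $E$. This logarithmic seed is then inserted into the algebraic splitting: the nonlinear term takes the form $\int_{0}^{t}f(\tau)b(\tau)\,d\tau$ with $f(\tau)=(1+\tau)E(\tau)$ and $b(\tau)=[(1+\tau)\ln^{2}(e+\tau)]^{-1}\in L^{1}(0,\infty)$, and Gronwall closes the estimate at the full rate $(1+t)^{-(\frac{2}{p}-1)}$ in a single step, with no iteration and no endpoint loss. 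To make your proposal work you would need to supply an analogue of this intermediate lemma (or some other device producing a summable seed) before the doubling argument can start; once such a seed is available, your iteration scheme is sound.
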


\begin{remark}\label{rem1.3}
1.\ The main idea used in the proof of Theorem \ref{thm1.1} is similar as the paper of  Ogawa, Rajopadhye and Schonbek \cite{ORS} on prove decay in the context of the NS equations with slowly varying external forces. In order to do it, we first rewrite the energy norm in the frequency space and  divide it into two parts, then the decay of the low frequency part of energy norm is obtained through generalized energy inequality,  while the decay of the  high frequency part is established by using the Fourier splitting method.  To prove Theorem \ref{thm1.2}, we adapt the Fourier splitting method used in Schonbek \cite{S1} and  Zhang \cite{Z} on prove decay rate of the $L^{2}$ norm of solutions to the two dimensional NS equations when the initial data in $L^{1}\cap L^{2}$. Some of the new difficulties appears due to the fact that system \eqref{eq1.1}--\eqref{eq1.4} is the coupling of the NS equations and  the  transported heat flows of harmonic map. Here we  first get the bounds of $\widehat{u}$ and $\widehat{\nabla d}$ by taking  the Fourier transform on system, and then  prove a preliminary estimate on the decay of the energy norm of $(u,\nabla d)$ (see Lemma \ref{lem3.3} below), which we then use to establish the result of Theorem \ref{thm1.2}.
\medskip

2.\ When the space dimension becomes three, due to the strong nonlinearity  of the term $|\nabla d|^{2} d$ in equation \eqref{eq1.2}, it seems difficult to prove that similar results of Theorems \ref{thm1.1} and \ref{thm1.2} still hold for system \eqref{eq1.1}--\eqref{eq1.4}. We hope we can overcome this problem in the near future.
\end{remark}

The remaining parts of the present paper are organized as follows.
In Section 2,  we shall give the proof of Theorem \ref{thm1.1}, to
this end, some useful estimates are established. Section 3 is
devoted to proving Theorem \ref{thm1.2}. Throughout the paper, we use $\|\cdot\|_{X}$
to denote the norm of the scalar $X$-functions or the norm of the
$n$-vector $X$-functions,  and $\widehat{f}$ to denote the Fourier transform of function $f\in L^{2}(\mathbb{R}^{2})\cap L^{1} (\mathbb{R}^{2})$ defined by
\begin{align*}
\widehat{f}(\xi)= \frac{1}{2\pi}\int_{\mathbb{R}^{2}} f(x)e^{-ix\cdot\xi}\text{d}x,  \quad  i=\sqrt{-1}.
\end{align*}
We also denote by $C$ the positive
constant, which may depend on the initial data, and its value may change from line to line.

\section{The proof of Theorem \ref{thm1.1}}

In this section, we shall give the proof of Theorem \ref{thm1.1},  before going to do it, let us  recall the following rigidity theorem, which was recently established in Lei, Li and Zhang \cite{LLZ}, reads as follows.

\begin{theorem}\label{thm2.1}
Let $\varepsilon_{0}>0$, $C_{0}>0$. There exists a positive constant $\overline{\omega}=\overline{\omega}(\varepsilon_{0},C_{0})\in (0,1)$ such that the following holds:

 If $d:\mathbb{R}^{2}\rightarrow \mathbb{S}^{1}$,  $\nabla d \in H^{1}(\mathbb{R}^{2})$ with
$\|\nabla d\|_{L^{2}}\leq C_{0}$ and $d_{2}\geq \varepsilon_{0}$, then
\begin{align*}
\|\nabla d\|_{L^{4}}^{4}\leq (1-\overline{\omega})\|\Delta d\|_{L^{2}}^{2}.
\end{align*}

Consequently for such maps the associated harmonic energy is coercive, i.e.
\begin{align*}
\|\Delta d + |\nabla d|^{2} d\|_{L^{2}}^{2}\geq \frac{\overline{\omega}}{2} \left(\|\Delta d\|_{L^{2}}^{2}+\|\nabla d\|_{L^{4}}^{4}\right).
\end{align*}
\end{theorem}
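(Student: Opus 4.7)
The plan is to exploit the assumption $d_{2}\geq \varepsilon_{0}>0$ to produce a single-valued scalar lift of $d$, which reduces the geometric inequality to a standard scalar interpolation estimate. Since $d_{2}\geq \varepsilon_{0}$ forces the image of $d$ to lie in the contractible arc $\{d_{2}>0\}\subset \mathbb{S}^{1}$, I define $\theta:=\arcsin(d_{1})$. This lift is globally well-defined, satisfies $|\theta|\leq \arccos(\varepsilon_{0})<\pi/2$, and yields $d=(\sin \theta,\cos \theta)$. Using $d_{2}\geq \varepsilon_{0}$ to control the denominator in $\nabla \theta=\nabla d_{1}/d_{2}$, the hypothesis $\nabla d\in H^{1}$ transfers to $\nabla \theta\in H^{1}(\mathbb{R}^{2})$ with $\|\nabla \theta\|_{L^{2}}=\|\nabla d\|_{L^{2}}\leq C_{0}$.

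With this lift in hand, a direct trigonometric computation gives the pointwise identities
\begin{align*}
|\nabla d|^{2}=|\nabla \theta|^{2}, \qquad |\Delta d|^{2}=|\Delta \theta|^{2}+|\nabla \theta|^{4}, \qquad |\Delta d+|\nabla d|^{2}d|^{2}=|\Delta \theta|^{2},
\end{align*}
which after integration can also be read off directly from $d\cdot \Delta d=-|\nabla d|^{2}$ (a consequence of $|d|=1$). In these coordinates the first claimed inequality becomes the scalar statement $\overline{\omega}\|\nabla \theta\|_{L^{4}}^{4}\leq (1-\overline{\omega})\|\Delta \theta\|_{L^{2}}^{2}$.

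I then invoke the two-dimensional Gagliardo--Nirenberg interpolation $\|f\|_{L^{4}}^{2}\leq C\|f\|_{L^{2}}\|\nabla f\|_{L^{2}}$ with $f=\nabla \theta$, combined with the Plancherel identity $\|\nabla^{2}\theta\|_{L^{2}}=\|\Delta \theta\|_{L^{2}}$ on $\mathbb{R}^{2}$, to obtain
\begin{align*}
\|\nabla \theta\|_{L^{4}}^{4}\leq C\|\nabla \theta\|_{L^{2}}^{2}\|\Delta \theta\|_{L^{2}}^{2}\leq CC_{0}^{2}\|\Delta \theta\|_{L^{2}}^{2}.
\end{align*}
Substituting $\|\Delta \theta\|_{L^{2}}^{2}=\|\Delta d\|_{L^{2}}^{2}-\|\nabla d\|_{L^{4}}^{4}$ and solving the resulting linear inequality in $\|\nabla d\|_{L^{4}}^{4}$ produces
\begin{align*}
\|\nabla d\|_{L^{4}}^{4}\leq \frac{CC_{0}^{2}}{1+CC_{0}^{2}}\|\Delta d\|_{L^{2}}^{2},
\end{align*}
so that one may take $\overline{\omega}:=1/(1+CC_{0}^{2})\in (0,1)$. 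The second (coercivity) inequality is then an immediate consequence: since $\|\nabla d\|_{L^{4}}^{4}\leq \|\Delta d\|_{L^{2}}^{2}$, splitting the bound $\|\Delta d+|\nabla d|^{2}d\|_{L^{2}}^{2}\geq \overline{\omega}\|\Delta d\|_{L^{2}}^{2}$ into two equal halves and replacing one half of $\tfrac{\overline{\omega}}{2}\|\Delta d\|_{L^{2}}^{2}$ by $\tfrac{\overline{\omega}}{2}\|\nabla d\|_{L^{4}}^{4}$ delivers the stated lower bound.

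The main technical obstacle is rigorously justifying the lifting: one must verify that $\nabla \theta\in H^{1}(\mathbb{R}^{2})$ inherits the full regularity of $\nabla d$ via the product/quotient rules (with $d_{2}\geq \varepsilon_{0}$ bounding the denominator), and justify the identity $\|\nabla^{2}\theta\|_{L^{2}}=\|\Delta \theta\|_{L^{2}}$ via Plancherel or a density argument under the mild hypotheses $\theta\in L^{\infty}$, $\nabla \theta\in H^{1}$. Once these regularity matters are settled, the remaining steps are routine 2D interpolation and elementary algebra; the parameter $\varepsilon_{0}$ enters only implicitly, as precisely the quantity that makes the scalar lift possible.
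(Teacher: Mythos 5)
The paper does not actually prove Theorem \ref{thm2.1}: it is quoted verbatim from Lei, Li and Zhang \cite{LLZ} and used as a black box, so there is no internal proof to compare yours against. Judged on its own terms, your argument is correct and self-contained. The lifting $\theta=\arcsin(d_{1})$ is legitimate precisely because $d_{2}\geq \varepsilon_{0}$ keeps $|d_{1}|\leq \sqrt{1-\varepsilon_{0}^{2}}<1$ and forces $\cos\theta=d_{2}$; the pointwise identities $|\nabla d|^{2}=|\nabla\theta|^{2}$, $|\Delta d|^{2}=|\Delta\theta|^{2}+|\nabla\theta|^{4}$ and $|\Delta d+|\nabla d|^{2}d|^{2}=|\Delta\theta|^{2}$ all check out; and the chain Ladyzhenskaya--plus--Plancherel ($\|\nabla^{2}\theta\|_{L^{2}}=\|\Delta\theta\|_{L^{2}}$ since $\nabla\theta$ is curl-free and lies in $H^{1}$, as $\nabla^{2}\theta=d_{2}^{-1}\nabla^{2}d_{1}-d_{2}^{-2}\nabla d_{1}\otimes\nabla d_{2}\in L^{2}$ using $\nabla d\in H^{1}\hookrightarrow L^{4}$) yields $\|\nabla d\|_{L^{4}}^{4}\leq CC_{0}^{2}(\|\Delta d\|_{L^{2}}^{2}-\|\nabla d\|_{L^{4}}^{4})$, from which the stated inequalities follow by elementary algebra. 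Two remarks: first, your constant $\overline{\omega}=1/(1+CC_{0}^{2})$ is independent of $\varepsilon_{0}$, which is a (harmless) strengthening of the statement --- $\varepsilon_{0}$ is used only to make the single-valued lift and the quotient rule available, exactly as you observe; second, the regularity bookkeeping you flag at the end (chain rule for $W^{2,2}_{loc}\cap L^{\infty}$ compositions, density for the Plancherel identity) is genuinely routine under the hypothesis $\nabla d\in H^{1}(\mathbb{R}^{2})$, so nothing essential is missing. I see no gap.
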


In what follows, we shall establish some preliminary estimates which will be needed in the proof of Theorem \ref{thm1.1}.

\begin{lemma}\label{lem2.2}
Let $\psi \in C^{1}((0,\infty);C^{1}(\mathbb{R}^{2})\cap L^{2}(\mathbb{R}^{2}))$. Let $(u,d)$ be the global weak solution to system \eqref{eq1.1}--\eqref{eq1.4} defined as Theorem \ref{thm1.1}. Then for  $0<s<t$
\begin{align}\label{eq2.1}
\|\widehat{u}\widehat{\psi}(t)\|_{L^{2}}^{2}&\leq \|\widehat{u}\widehat{\psi}(s)\|_{L^{2}}^{2}
+2\int_{s}^{t}\left|\langle\widehat{\psi}' \widehat{u}(\tau),\widehat{\psi} \widehat{u}(\tau)\rangle-\||\xi|\widehat{u}\widehat{\psi}(\tau)\|_{L^{2}}^{2}\right|\text{d}\tau\nonumber\\
+&2\int_{s}^{t}\left|\langle\xi\cdot\widehat{u\otimes u}(\tau),\widehat{\psi}^{2} \widehat{u}(\tau)\rangle\right|\text{d}\tau
+2\int_{s}^{t}\left|\langle\xi\cdot\widehat{\nabla d\odot \nabla d }(\tau),\widehat{\psi}^{2} \widehat{u}(\tau)\rangle\right|\text{d}\tau;  \\
       \label{eq2.2}
\|\widehat{\nabla d}\widehat{\psi}(t)\|_{L^{2}}^{2}&\leq \|\widehat{\nabla d}\widehat{\psi}(s)\|_{L^{2}}^{2}
+2\int_{s}^{t}\left|\langle\widehat{\psi}' \widehat{\nabla d}(\tau),\widehat{\psi} \widehat{\nabla d}(\tau)\rangle-\||\xi|\widehat{\nabla d}\widehat{\psi}(\tau)\|_{L^{2}}^{2}\right|\text{d}\tau\nonumber\\
+&2\int_{s}^{t}\left|\langle\xi\cdot\widehat{u\cdot\nabla d}(\tau),\widehat{\psi}^{2} \widehat{\nabla d}(\tau)\rangle\right|\text{d}\tau
+2\int_{s}^{t}\left|\langle\xi\cdot\widehat{|\nabla d|^{2} d }(\tau),\widehat{\psi}^{2} \widehat{\nabla d}(\tau)\rangle\right|\text{d}\tau.
\end{align}
\end{lemma}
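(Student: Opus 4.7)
The two bounds are both identities-turned-into-inequalities: each follows by Plancherel-type testing of the Fourier-transformed momentum and orientation equations against the weights $|\widehat{\psi}|^{2}\overline{\widehat{u}}$ and $|\widehat{\psi}|^{2}\overline{\widehat{\nabla d}}$, respectively, and then bounding each time-integrated summand by its absolute value.

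For \eqref{eq2.1} I would first take the spatial Fourier transform of \eqref{eq1.1} to obtain
\begin{align*}
\partial_{\tau}\widehat{u}+|\xi|^{2}\widehat{u}+i\xi\cdot\widehat{u\otimes u}+i\xi\widehat{P}=-i\xi\cdot\widehat{\nabla d\odot\nabla d},
\end{align*}
and observe that $\nabla\cdot u=0$ gives $\xi\cdot\widehat{u}=0$, so that the pressure contribution $i\xi\widehat{P}$ is orthogonal to $\widehat{u}$ in $L^{2}(\mathbb{R}^{2}_{\xi})$ and drops out. Multiplying by $|\widehat{\psi}|^{2}\overline{\widehat{u}}$, integrating in $\xi$, taking real parts, and accounting for the time dependence of $\widehat{\psi}$ gives the differential identity
\begin{align*}
\tfrac{1}{2}\tfrac{d}{d\tau}\|\widehat{u}\widehat{\psi}\|_{L^{2}}^{2}=\mathrm{Re}\,\langle\widehat{\psi}'\widehat{u},\widehat{\psi}\widehat{u}\rangle-\||\xi|\widehat{u}\widehat{\psi}\|_{L^{2}}^{2}-\mathrm{Re}\,\langle i\xi\cdot\widehat{u\otimes u},\widehat{\psi}^{2}\widehat{u}\rangle-\mathrm{Re}\,\langle i\xi\cdot\widehat{\nabla d\odot\nabla d},\widehat{\psi}^{2}\widehat{u}\rangle.
\end{align*}
Integrating from $s$ to $t$, moving the absolute value inside the time integral, and grouping the first two terms into one factor produces exactly \eqref{eq2.1}.

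For \eqref{eq2.2} the strategy is parallel, once the equation for $d$ is converted into an equation for $\nabla d$ in Fourier variables. Applying $i\xi$ to the Fourier transform of \eqref{eq1.2}, and using $\widehat{\nabla d}=i\xi\widehat{d}$, yields
\begin{align*}
\partial_{\tau}\widehat{\nabla d}+|\xi|^{2}\widehat{\nabla d}+i\xi\,\widehat{u\cdot\nabla d}=i\xi\,\widehat{|\nabla d|^{2}d}.
\end{align*}
Testing against $|\widehat{\psi}|^{2}\overline{\widehat{\nabla d}}$, taking real parts, integrating in $\xi$ and then in $\tau\in[s,t]$, and bounding term-by-term by absolute values delivers \eqref{eq2.2}.

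\textbf{Main obstacle.} The delicate point is justifying the pointwise-in-$\tau$ differentiation of $\|\widehat{u}\widehat{\psi}\|_{L^{2}}^{2}$ and $\|\widehat{\nabla d}\widehat{\psi}\|_{L^{2}}^{2}$ for a global \emph{weak} solution of the regularity $u\in L^{\infty}_{t}L^{2}_{x}\cap L^{2}_{t}H^{1}_{x}$ and $\nabla d\in L^{\infty}_{t}L^{2}_{x}$. The standard cure is to perform the computation on a regularized solution (for instance Galerkin approximations, or a space-time mollification), where the identity holds exactly, and then to pass to the limit using the energy-class bounds together with the fact that $\widehat{\psi}(\tau,\xi)$ is smooth in $\tau$ and in $L^{2}\cap L^{\infty}$ in $\xi$. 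The limit procedure generally converts the identity into an inequality, which is precisely why the statements \eqref{eq2.1}--\eqref{eq2.2} are written with $\le$ and with absolute values on each summand of the right-hand side.
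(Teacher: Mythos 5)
Your proposal is correct and follows essentially the same route as the paper: Fourier-transform the momentum equation (killing the pressure via $\xi\cdot\widehat{u}=0$), test against $\widehat{\psi}^{2}\widehat{u}$, integrate in time and bound by absolute values; then do the same for the equation satisfied by $\nabla d$, and justify the computation for weak solutions by a mollification/approximation argument as in Ogawa--Rajopadhye--Schonbek. Your version is if anything slightly more careful about conjugates and real parts, but the substance is identical to the paper's proof.
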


\begin{proof}
We first let $(u,d)$ be a smooth solution to system \eqref{eq1.1}--\eqref{eq1.4}. Taking the Fourier transform on \eqref{eq1.1}, multiplying the resulting equality  by $\widehat{\psi}^{2}\widehat{u}$ and integrating by parts, it follows that
\begin{align*}
\frac{d}{dt}\|\widehat{\psi}\widehat{u}(t)\|_{\!L^{\!2}}^{2}\!
=\!2\!\left(\!\langle\widehat{\psi}'\widehat{u}(t),\widehat{\psi}\widehat{u}(t)\rangle\!-\!\||\xi|\widehat{\psi}\widehat{u}(t)\|_{\!L^{\!2}}^{2}\!\right)
\!-\!2\!\langle\xi\cdot\widehat{u\otimes u}(t),\widehat{\psi}^{2}\widehat{u}(t)\rangle\!-\!2\!\langle\xi \cdot\widehat{\nabla d\odot \nabla d}(t),\widehat{\psi}^{2}\widehat{u}(t)\rangle,
\end{align*}
where we have used the fact that the divergence free condition \eqref{eq1.3} implies that $u\cdot \nabla u=\nabla(u\otimes u)$ and $\langle\widehat{\nabla P},\widehat{\psi}^{2} \widehat{u}\rangle=0$. Integrating the above equality  with respect to times variable  between $s$ and $t$ yields
\begin{align*}
\|\widehat{u}\widehat{\psi}(t)\|_{L^{2}}^{2}&\leq \|\widehat{u}\widehat{\psi}(s)\|_{L^{2}}^{2}
+2\int_{s}^{t}\left|\langle\widehat{\psi}' \widehat{u}(\tau),\widehat{\psi} \widehat{u}(\tau)\rangle-\||\xi|\widehat{u}\widehat{\psi}(\tau)\|_{L^{2}}^{2}\right|\text{d}\tau\nonumber\\
+&2\int_{s}^{t}\left|\langle\xi\cdot\widehat{u\otimes u}(\tau),\widehat{\psi}^{2} \widehat{u}(\tau)\rangle\right|\text{d}\tau
+2\int_{s}^{t}\left|\langle\xi\cdot\widehat{\nabla d\odot \nabla d }(\tau),\widehat{\psi}^{2} \widehat{u}(\tau)\rangle\right|\text{d}\tau.
\end{align*}
To prove \eqref{eq2.2}, by applying $\nabla$ to \eqref{eq1.2}, we have
\begin{align}\label{eq2.3}
\nabla d_{t}-\Delta \nabla d+\nabla (u\cdot\nabla d)=\nabla(|\nabla d|^{2}d).
\end{align}
Similarly, by taking the Fourier transform on \eqref{eq2.3}, multiplying the resulting equality by $\widehat{\psi}^{2}\widehat{\nabla d}$, and integrating with
respect to space variable $x$ on $\mathbb{R}^{2}$ and then with respect to time variable $t$ between $s$ and $t$, we get \eqref{eq2.2}.

 By using the standard mollifiers method, we can extend the estimates  \eqref{eq2.1} and
  \eqref{eq2.2} to weak solutions. For the details, we refer the readers to see Ogawa, Rajopadhye and Schonbek \cite{ORS}. Thus we complete the proof of  Lemma \ref{lem2.2}.
\end{proof}

\begin{corollary}\label{cor2.3}
Let $\phi\in C^{1}((0,\infty);L^{2}(\mathbb{R}^{2}))$. Let $(u,d)$ be the global weak solution to system \eqref{eq1.1}--\eqref{eq1.4} defined as Theorem \ref{thm1.1}.  Then for $0<s<t$
\begin{align}\label{eq2.4}
\|\widehat{u}\phi(t)\|_{L^{2}}^{2}\leq& \|\widehat{u}(s)e^{-|\xi|^{2}(t-s)}\phi(t)\|_{L^{2}}^{2}+2\int_{s}^{t} \left|\langle\xi\cdot\widehat{(u\otimes u)}(\tau),e^{-2|\xi|^{2}(t-\tau)}\phi^{2}(t)\widehat{u}(\tau)\rangle\right|\text{d}\tau\nonumber\\
&+2\int_{s}^{t} \left|\langle\xi\cdot\widehat{(\nabla d\otimes \nabla d)}(\tau),e^{-2|\xi|^{2}(t-\tau)}\phi^{2}(t)\widehat{u}(\tau)\rangle\right|\text{d}\tau;\\
          \label{eq2.5}
\|\widehat{\nabla d}\phi(t)\|_{L^{2}}^{2}\leq& \|\widehat{\nabla d}(s)e^{-|\xi|^{2}(t-s)}\phi(t)\|_{L^{2}}^{2}+2\int_{s}^{t} \left|\langle\xi\cdot\widehat{(u\cdot\nabla d)}(\tau),e^{-2|\xi|^{2}(t-\tau)}\phi^{2}(t)\widehat{\nabla d}(\tau)\rangle\right|\text{d}\tau\nonumber\\
&+2\int_{s}^{t} \left|\langle\xi\cdot\widehat{(|\nabla d|^{2} d)}(\tau),e^{-2|\xi|^{2}(t-\tau)}\phi^{2}(t)\widehat{\nabla d}(\tau)\rangle\right|\text{d}\tau.
\end{align}
\end{corollary}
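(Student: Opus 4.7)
The plan is to deduce Corollary \ref{cor2.3} as a direct consequence of Lemma \ref{lem2.2} by choosing the test function so that its Fourier transform carries a sharp heat-kernel weight. Specifically, I would fix $t>0$ and set, for $\tau\in(s,t]$ and $\xi\in\mathbb{R}^{2}$,
\[
\widehat{\psi}(\tau,\xi):=e^{-|\xi|^{2}(t-\tau)}\phi(t,\xi),
\]
where $\phi(t,\cdot)$ is frozen at the final time and regarded as a fixed $L^{2}$ multiplier in $\xi$. The function $\psi(\tau,\cdot)$ itself is then the convolution of the heat kernel at time $t-\tau$ with the inverse Fourier transform of $\phi(t)$.

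The motivation for this choice is that $\partial_{\tau}\widehat{\psi}(\tau,\xi)=|\xi|^{2}\widehat{\psi}(\tau,\xi)$, which forces
\[
\langle\widehat{\psi}'\widehat{u}(\tau),\widehat{\psi}\widehat{u}(\tau)\rangle=\||\xi|\widehat{\psi}\widehat{u}(\tau)\|_{L^{2}}^{2},
\]
so the linear dissipation integrand on the right-hand side of \eqref{eq2.1} vanishes pointwise in $\tau$. Substituting $\widehat{\psi}(t,\xi)=\phi(t,\xi)$ and $\widehat{\psi}(s,\xi)=e^{-|\xi|^{2}(t-s)}\phi(t,\xi)$ into \eqref{eq2.1}, and noting that $\widehat{\psi}^{2}(\tau,\xi)=e^{-2|\xi|^{2}(t-\tau)}\phi^{2}(t,\xi)$, yields \eqref{eq2.4} at once. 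The inequality \eqref{eq2.5} follows by performing the identical substitution in \eqref{eq2.2}, where the argument goes through because the heat kernel is insensitive to whether it multiplies $\widehat{u}$ or $\widehat{\nabla d}$.

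The step I expect to require the most care is verifying the regularity hypothesis $\psi\in C^{1}((0,\infty);C^{1}(\mathbb{R}^{2})\cap L^{2}(\mathbb{R}^{2}))$ demanded by Lemma \ref{lem2.2}. For every $\tau<t$ the factor $e^{-|\xi|^{2}(t-\tau)}$ is Schwartz in $\xi$, so $\psi(\tau,\cdot)$ is smooth and square-integrable; however, at the endpoint $\tau=t$ the smoothing disappears and only the bare $L^{2}$ regularity of $\phi(t)$ survives. I would handle this by applying Lemma \ref{lem2.2} first on the truncated interval $[s,t-\delta]$ with $\delta>0$ and then sending $\delta\to 0^{+}$, using the continuity of $\tau\mapsto\|\widehat{u}(\tau)\widehat{\psi}(\tau)\|_{L^{2}}^{2}$ on $(0,t]$ (which follows from the weak-$L^{2}$ continuity of $u$ together with the smoothness of the weight for $\tau<t$) to pass the inequality to the limit. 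As a backup, one could approximate $\phi(t)$ by Schwartz functions $\phi_{n}(t)$, apply the above argument with $\phi_{n}$, and then let $n\to\infty$ by dominated convergence on both sides.
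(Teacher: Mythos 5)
Your proposal is correct and follows essentially the same route as the paper: choose $\widehat{\psi}$ to be the heat-kernel weight $e^{-|\xi|^{2}(t-\tau)}\phi(t)$ so that $\widehat{\psi}'=|\xi|^{2}\widehat{\psi}$ cancels the dissipation integrand in Lemma \ref{lem2.2}, then regularize to handle the endpoint $\tau=t$. The only cosmetic difference is that the paper regularizes by shifting the exponent to $e^{-|\xi|^{2}(t+\eta-\tau)}$ and letting $\eta\to 0^{+}$, whereas you truncate the time interval to $[s,t-\delta]$ and let $\delta\to 0^{+}$; the two devices are interchangeable.
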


\begin{proof}
Take $\widehat{\psi}_{\eta}(\tau):= e^{-|\xi|^{2}(t+\eta-\tau)}\phi(\xi,t)$ for $\eta>0$. Then we have
\begin{align*}
\langle\widehat{\psi}'_{\eta} \widehat{f}(\tau),\widehat{\psi}_{\eta} \widehat{f}(\tau)\rangle
=\langle|\xi|^{2}\widehat{\psi}_{\eta} \widehat{f}(\tau),\widehat{\psi}_{\eta} \widehat{f}(\tau)\rangle
=\||\xi|\widehat{\psi}'_{\eta} \widehat{f}(\tau)\|_{L^{2}}^{2}\quad \text{ for all } f\in L^{2}(\mathbb{R}^{2}).
\end{align*}
Hence, the first integrand in the right hand side of \eqref{eq2.1} and \eqref{eq2.2} vanishes. Taking limit as $\eta\rightarrow 0$, we see that
\begin{align*}
\widehat{\psi}(t) ={\phi}(\xi,t)\quad \text{  and  } \widehat{\psi}(s)=e^{-|\xi|^{2}(t-s)}\phi(\xi,t),
\end{align*}
which together with \eqref{eq2.1} and \eqref{eq2.2} ensure \eqref{eq2.4} and \eqref{eq2.5}. Thus we complete the proof of Corollary \ref{cor2.3}.
\end{proof}

\begin{lemma}\label{lem2.4}
Let $E\in C^{1}((0,\infty);\mathbb{R})$ and $\psi\in C^{1}((0,\infty);L^{\infty}(\mathbb{R}^{2}))$ such that $\psi^{2}\in L^{\infty}((0,\infty);L^{\infty}(\mathbb{R}^{2}))$ and $\nabla \mathcal{F}^{-1}(\psi^{2})\in L^{\infty}((0,\infty);L^{2}(\mathbb{R}^{2}))$.
 Let $(u,d)$ be the global weak solution to system \eqref{eq1.1}--\eqref{eq1.4} defined as Theorem \ref{thm1.1}. Then
 \begin{align}\label{eq2.6}
 E(t)\|\psi\widehat{u}(t)\|_{L^{2}}^{2} \leq &E(s)\|\psi \widehat{u}(s)\|_{L^{2}}^{2}+\!\int_{s}^{t} \! E'(\tau)\|\psi\widehat{u}(\tau)\|_{L^{2}}^{2}\text{d}\tau+2\!\int_{s}^{t}\! E(\tau) \left(\langle\psi'\widehat{u}(\tau),\psi\widehat{u}(\tau)\rangle
 -\||\xi|\psi\widehat{u}(\tau)\|_{L^{2}}^{2}\right)\text{d}\tau\nonumber\\
 \!\!+ 2\int_{s}^{t}& E(\tau)\left|\langle\xi\cdot\widehat{u\otimes u}(\tau),\psi^{2}(\tau)\widehat{u}(\tau)\rangle\right|\text{d}\tau
 +2\int_{s}^{t}E(\tau)\left|\langle\xi\cdot\widehat{\nabla d\odot\nabla d}(\tau),\psi^{2}(\tau)\widehat{u}(\tau)\rangle\right|\text{d}\tau;\\
 \medskip
            \label{eq2.7}
 E(t)\|\psi\widehat{\nabla d}(t)\|_{\!L^{\!2}}^{2}\!\leq & E(s)\|\psi \widehat{\nabla d}(s)\|_{\!L^{\!2}}^{2}\!+\!\!\!\int_{s}^{t} \!\! \! E'(\tau)\|\psi\widehat{\nabla d}(\tau)\|_{\!L^{\!2}}^{2}\text{d}\tau\!+\!2\!\!\int_{s}^{t}\!\!\! E(\tau)\! \left(\!\!\langle\psi'\widehat{\nabla d}(\tau),\psi\widehat{\nabla d}(\tau)\rangle
 \!-\!\||\xi|\psi\widehat{\nabla d}(\tau)\|_{\!L^{\!2}}^{2}\!\right)\!\text{d}\tau\nonumber\\
 + 2\int_{s}^{t}& E(\tau)\left|\langle\xi\cdot\widehat{u\cdot\nabla d}(\tau),\psi^{2}(\tau)\widehat{u}(\tau)\rangle\right|\text{d}\tau
 +2\int_{s}^{t}E(\tau)\left|\langle\xi\cdot\widehat{|\nabla d|^{2} d}(\tau),\psi^{2}(\tau)\widehat{u}(\tau)\rangle\right|\text{d}\tau.
 \end{align}
\end{lemma}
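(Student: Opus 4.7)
The plan is to derive (2.6) and (2.7) by combining the differential identity underlying Lemma \ref{lem2.2} with the time-dependent weight $E(t)$. First, for smooth solutions $(u,d)$, applying the Fourier transform to \eqref{eq1.1}, using $\xi\cdot\widehat{u}=0$ to kill the pressure term in the relevant pairing, and writing $u\cdot\nabla u=\nabla\cdot(u\otimes u)$, I obtain
\[
\tfrac{d}{dt}\|\psi\widehat{u}(t)\|_{L^{2}}^{2}=2\langle\psi'\widehat{u},\psi\widehat{u}\rangle-2\||\xi|\psi\widehat{u}\|_{L^{2}}^{2}-2\langle i\xi\cdot\widehat{u\otimes u},\psi^{2}\widehat{u}\rangle-2\langle i\xi\cdot\widehat{\nabla d\odot\nabla d},\psi^{2}\widehat{u}\rangle.
\]
Multiplying by $E(t)$ and applying the product rule yields
\[
\tfrac{d}{dt}\bigl(E(t)\|\psi\widehat{u}(t)\|_{L^{2}}^{2}\bigr)=E'(t)\|\psi\widehat{u}(t)\|_{L^{2}}^{2}+E(t)\tfrac{d}{dt}\|\psi\widehat{u}(t)\|_{L^{2}}^{2}.
\]
Integrating this equality from $s$ to $t$ and then bounding the two nonlinear pairings by their absolute values produces inequality \eqref{eq2.6}. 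Estimate \eqref{eq2.7} is obtained in parallel: apply $\nabla$ to \eqref{eq1.2} as in \eqref{eq2.3}, take the Fourier transform, test against $E(t)\psi^{2}\widehat{\nabla d}$, integrate in $x$ and then in $t$, and absorb the nonlinear terms into absolute values.

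The extension from smooth to weak solutions follows the same mollification scheme sketched in the proof of Lemma \ref{lem2.2} (cf.\ Ogawa, Rajopadhye and Schonbek \cite{ORS}): replace $(u,d)$ by $(u\ast\rho_{\varepsilon},d\ast\rho_{\varepsilon})$, apply the smooth identity above to the regularization, and pass to the limit. The regularity hypotheses $u\in L^{\infty}_{t}L^{2}_{x}\cap L^{2}_{t}H^{1}_{x}$ and $d\in L^{\infty}_{t}H^{1}_{x}$ are enough to pass to the limit in each pairing, since every integrand is a continuous functional of the objects $\widehat{u}$, $\widehat{\nabla d}$, $\widehat{u\otimes u}$, $\widehat{\nabla d\odot\nabla d}$, $\widehat{u\cdot\nabla d}$, $\widehat{|\nabla d|^{2}d}$ in their natural norms.

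The one genuinely delicate step is that the frequency-side weight $\psi$ is only assumed to be $L^{\infty}$ in $\xi$, so the pairings $\langle\xi\cdot\widehat{u\otimes u},\psi^{2}\widehat{u}\rangle$ and $\langle\xi\cdot\widehat{|\nabla d|^{2}d},\psi^{2}\widehat{\nabla d}\rangle$ cannot be controlled by Plancherel alone. This is exactly where the standing hypothesis $\nabla\mathcal{F}^{-1}(\psi^{2})\in L^{\infty}((0,\infty);L^{2}(\mathbb{R}^{2}))$ is used: via Parseval these pairings can be rewritten as physical-space integrals of $u\otimes u$ or $|\nabla d|^{2}d$ (which lie in $L^{\infty}_{t}L^{1}_{x}$ thanks to $u\in L^{\infty}_{t}L^{2}_{x}$ and $\nabla d\in L^{\infty}_{t}L^{2}_{x}$) against the spatial gradient of $\mathcal{F}^{-1}(\psi^{2}\widehat{u})$ or $\mathcal{F}^{-1}(\psi^{2}\widehat{\nabla d})$, expanded by the product rule and bounded using the assumed $L^{2}_{x}$ control on $\nabla\mathcal{F}^{-1}(\psi^{2})$ together with the $L^{2}_{x}$ bound on $\widehat{u}$, $\widehat{\nabla d}$. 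I expect this compatibility check to be the main technical hurdle; once it is in place, \eqref{eq2.6} and \eqref{eq2.7} follow by the routine integration-and-bookkeeping described above.
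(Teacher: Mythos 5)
Your proposal is correct and follows essentially the same route as the paper: derive the frequency-space differential identity as in Lemma \ref{lem2.2}, multiply by the weight $E(t)$ via the product rule, integrate from $s$ to $t$, bound the nonlinear pairings by absolute values, and pass from smooth to weak solutions by (retarded) mollification, with the hypotheses $\psi^{2}\in L^{\infty}_{t}L^{\infty}_{\xi}$ and $\nabla\mathcal{F}^{-1}(\psi^{2})\in L^{\infty}_{t}L^{2}_{x}$ securing convergence of the nonlinear terms exactly as in Ogawa--Rajopadhye--Schonbek. Your elaboration of where those hypotheses enter is a faithful (and more explicit) version of the paper's one-line remark, and your writing $\psi^{2}\widehat{\nabla d}$ in the last two pairings of \eqref{eq2.7} corrects what appears to be a typo in the stated lemma.
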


 \begin{proof}
We first prove the estimates \eqref{eq2.6} and \eqref{eq2.7} for smooth solutions. As in Lemma \ref{lem2.2}, we take the Fourier transform of \eqref{eq1.1}, multiply it by
$E\psi^{2} \widehat{u}$, integrate it with respect to $x$ and then with respect to $t$ between $s$ and $t$, we obtain the formal estimate \eqref{eq2.6}. In a similar way, by taking the Fourier transform of \eqref{eq2.3}, multiplying it by
$E\psi^{2} \widehat{\nabla d}$,  integrating it with respect to $x$ and then with respect to $t$  between $s$ and $t$, we obtain \eqref{eq2.7}. When using the retarded mollifiers method, the conditions $\psi^{2}\in L^{\infty}((0,\infty);L^{\infty}(\mathbb{R}^{2}))$ and $\nabla \mathcal{F}^{-1}(\psi^{2})\in L^{\infty}((0,\infty);L^{2}(\mathbb{R}^{2}))$ will guarantee the weak convergence of the nonlinear term. For the details we see  Ogawa, Rajopadhye and Schonbek \cite{ORS}.
\end{proof}
\medskip
\\
\textbf{Proof of Theorem \ref{thm1.1}.}
In what follows, we adapt the argument used in Ogawa, Rajopadhye and Schonbek \cite{ORS} to prove decay of weak solution to the Navier--Stokes equations with slowly varying external forces. We shall  split the proof into two steps, i.e., the estimates for the low frequency part of the energy and for the high frequency part. Let $(u, d)$ be a solution to \eqref{eq1.1}--\eqref{eq1.4}. For $\phi=\phi(\xi,t)$, we have
\begin{align*}
\|(\widehat{u}(t),\widehat{\nabla d}(t))\|_{L^{2}}^{2}\leq 2\left(\|(\phi(t)\widehat{u}(t),\phi(t)\widehat{\nabla d}(t))\|_{L^{2}}^{2}+\|((1-\phi(t))\widehat{u}(t),(1-\phi(t))\widehat{\nabla d}(t))\|_{L^{2}}^{2}\right).
\end{align*}
We call the terms $\|(\phi(t)\widehat{u}(t),\phi(t)\widehat{\nabla d}(t))\|_{L^{2}}$ and $\|((1-\phi(t))\widehat{u}(t),(1-\phi(t))\widehat{\nabla d}(t))\|_{L^{2}}$ the low and high frequency parts of the energy respectively.
\medskip

\textbf{\textit{Step 1.} low frequency part energy estimate.} Before going to do it, we first notice that, for a weak solution
to system \eqref{eq1.1}--\eqref{eq1.4}, one has the following basic energy law (see \cite{LLW})
\begin{align}\label{eq2.8}
\!\|u(t)\|_{\!L^{2}}^{2}\! +\!\|\nabla d(t)\|_{\!L^{\!2}}^{2}\!+\!2\!\!\int_{0}^{t}\!\!
\left(\|\nabla u(\tau)\|_{\!L^{\!2}}^{2}\!+\!\|(\Delta d\!+\!|\nabla d|^{2}d)(\tau)\|_{\!L^{\!2}}^{2}\!\right)\text{d}\tau\!=\!\|u_{0}\|_{\!L^{\!2}}^{2}\!+\!\|\nabla d_{0}\|_{\!L^{\!2}}^{2}, \text{ for all } 0\!<\!t\!\leq\infty.
\end{align}
We also notice that if the initial orientation $d_{02}\geq \varepsilon_{0}$ for some positive $\varepsilon_{0}$, by using the standard maximum principle to the second component of $d_{2}$,  we get
\begin{align}\label{eq2.9}
d_{2}\geq \varepsilon_{0},\quad \text{for all } t>0.
\end{align}
By using \eqref{eq2.9},  Theorem \ref{thm2.1}  and energy equality \eqref{eq2.8},  one has the following energy inequality
\begin{align}\label{eq2.10}
\!\|u(t)\|_{\!L^{2}}^{2}\! +\!\|\nabla d(t)\|_{\!L^{\!2}}^{2}\!+\!\overline{\omega}\!\int_{0}^{t}\!\!
\left(\|\nabla u(\tau)\|_{\!L^{\!2}}^{2}\!+\!\|\Delta d(\tau)\|_{\!L^{\!2}}^{2}\!\right)\text{d}\tau\leq\|u_{0}\|_{\!L^{\!2}}^{2}\!+\!\|\nabla d_{0}\|_{\!L^{\!2}}^{2}, \text{ for all } 0\!<\!t\!\leq\infty.
\end{align}
In what follows, we choose
\begin{align*}
\phi(\xi,t):=e^{-|\xi|^{2}t},
\end{align*}
by  applying Corollary \ref{cor2.3} with $\phi$ defined as above equality, we obtain
\begin{align}\label{eq2.11}
&\|\widehat{u}(t)\phi(t)\|_{L^{2}}^{2} +\|\widehat{\nabla d}(t)\phi(t)\|_{L^{2}}^{2}
\leq\|\widehat{u}(s)\phi(t-s)\phi(t)\|_{L^{2}}^{2}+\|\widehat{\nabla d}(s)\phi(t-s)\phi(t)\|_{L^{2}}^{2}\nonumber\\
+&2\int_{s}^{t} \left|\langle\xi\cdot\widehat{(u\otimes u)}(\tau),\phi^{2}(t-\tau)\phi^{2}(t)\widehat{u}(\tau)\rangle\right|\text{d}\tau
+2\int_{s}^{t} \left|\langle\xi\cdot\widehat{(\nabla d\otimes \nabla d)}(\tau),\phi^{2}(t-\tau)\phi^{2}(t)\widehat{u}(\tau)\rangle\right|\text{d}\tau\nonumber\\
+&2\int_{s}^{t} \left|\langle\xi\cdot\widehat{(u\cdot\nabla d)}(\tau),\phi^{2}(t-\tau)\phi^{2}(t)\widehat{\nabla d}(\tau)\rangle\right|\text{d}\tau+2\int_{s}^{t} \left|\langle\xi\cdot\widehat{(|\nabla d|^{2} d)}(\tau),\phi^{2}(t-\tau)\phi^{2}(t)\widehat{\nabla d}(\tau)\rangle\right|\text{d}\tau\nonumber\\
:= & \|\widehat{u}(s)\phi(t-s)\phi(t)\|_{L^{2}}^{2}+\|\widehat{\nabla d}(s)\phi(t-s)\phi(t)\|_{L^{2}}^{2}
+I_{1}(t)+I_{2}(t)+I_{3}(t)+I_{4}(t).
\end{align}
By using the Dominated Convergence Theorem, it is easy to see
\begin{align*}
\overline{\lim_{t\rightarrow \infty} }\|\widehat{u}(s)\phi(t-s)\phi(t)\|_{L^{2}}^{2}=0
\quad \text{and } \overline{\lim_{t\rightarrow \infty}} \|\widehat{\nabla d}(s)\phi(t-s)\phi(t)\|_{L^{2}}^{2}=0.
\end{align*}
By the H\"{o}lder inequality and the interpolation inequality, it follows that
\begin{align*}
\left|\langle\xi\cdot\widehat{(u\otimes u)}(\tau),\phi^{2}(t-\tau)\phi^{2}(t)\widehat{u}(\tau)\rangle\right|
\leq&  \left|\langle \widehat{(u\otimes u)}(\tau),\phi^{2}(t-\tau)\phi^{2}(t)|\xi|\widehat{u}(\tau)\rangle\right|\nonumber\\
\leq&\|\widehat{(u\otimes u)}\|_{L^{2}} \|\phi^{2}(t-\tau)\phi^{2}(t)\|_{L^{\infty}}\||\xi|\widehat{u}\|_{L^{2}}\nonumber\\
=&\|(u\otimes u)\|_{L^{2}} \|\phi^{2}(t-\tau)\phi^{2}(t)\|_{L^{\infty}}\|\nabla{u}\|_{L^{2}}\nonumber\\
\leq& C \|u\|_{L^{4}}^{2}\|\nabla{u}\|_{L^{2}}\leq C\|u\|_{L^{2}}\|\nabla{u}\|_{L^{2}}^{2}.
\end{align*}
Hence, we have
\begin{align*}
I_{1}(t)\leq C\int_{s}^{t} \|u(\tau)\|_{L^{2}}\|\nabla{u}(\tau)\|_{L^{2}}^{2}\text{d}\tau\leq C\int_{s}^{t} \|\nabla{u}(\tau)\|_{L^{2}}^{2}\text{d}\tau,
\end{align*}
where we have used the energy inequality \eqref{eq2.10}  in the last inequality. In a similar way as derive the estimations of $I_{1}$, it is easy to see that
\begin{align*}
I_{2}(t)+I_{3}(t)+I_{4}(t)\leq C\int_{s}^{t} (\|\nabla u(\tau)\|_{L^{2}}^{2}+\|\nabla^{2} d(\tau)\|_{L^{2}}^{2})\text{d}\tau.
\end{align*}
Therefore, by taking a limit $t\rightarrow\infty$ in \eqref{eq2.11} that
\begin{align*}
\overline{\lim_{t\rightarrow \infty} } \left(\|\widehat{u}(t)\phi(t)\|_{L^{2}}^{2} +\|\widehat{\nabla d}(t)\phi(t)\|_{L^{2}}^{2}\right)
\leq C\int_{s}^{\infty} (\|\nabla u(\tau)\|_{L^{2}}^{2}+\|\nabla^{2} d(\tau)\|_{L^{2}}^{2})\text{d}\tau.
\end{align*}
Since the right hand side the above inequality convergence to $0$ as $s\rightarrow \infty$, we obtain that the low frequency part of the energy goes to zero.
\medskip

\textbf{\textit{Step 2.} high  frequency part energy estimate.}  Let $\psi(\xi,t)=1-\phi(\xi,t)$. By applying Lemma \ref{lem2.4}, after rearranging terms, we obtain
\begin{align*}
&\|\psi(t)\widehat{u}(t)\|_{L^{2}}^{2}+\|\psi(t)\widehat{\nabla d}(t)\|_{L^{2}}^{2}\leq \frac{E(s)}{E(t)}\left(\|\psi \widehat{u}(s)\|_{L^{2}}^{2}+\|\psi \widehat{\nabla d}(s)\|_{\!L^{\!2}}^{2}\right)\nonumber\\
+&\frac{1}{E(t)} \!\int_{s}^{t} \!\left( E'(\tau)\|\psi\widehat{u}(\tau)\|_{L^{2}}^{2}-2 E(\tau)\||\xi|\psi\widehat{u}(\tau)\|_{L^{2}}^{2}\right)+\left( E'(\tau)\|\psi\widehat{\nabla d}(\tau)\|_{\!L^{2}}^{2}-2E(\tau) \||\xi|\psi\widehat{\nabla d}(\tau)\|_{\!L^{2}}^{2}\right)\text{d}\tau\nonumber\\
+&\frac{2}{E(t)} \!\int_{s}^{t} \! E(\tau)\left(\langle\psi'\widehat{u}(\tau),\psi\widehat{u}(\tau)\rangle+ \langle\psi'\widehat{\nabla d}(\tau),\psi\widehat{\nabla d}(\tau)\rangle
\right)\text{d}\tau \nonumber\\
+&\frac{2}{E(t)} \!\int_{s}^{t} E(\tau)\left|\langle\xi\cdot\widehat{u\otimes u}(\tau),\psi^{2}(\tau)\widehat{u}(\tau)\rangle\right|\text{d}\tau
 +\frac{2}{E(t)}\int_{s}^{t}E(\tau)\left|\langle\xi\cdot\widehat{\nabla d\odot\nabla d}(\tau),\psi^{2}(\tau)\widehat{u}(\tau)\rangle\right|\text{d}\tau;\nonumber\\
 +&\frac{2}{E(t)} \int_{s}^{t} E(\tau)\left|\langle\xi\cdot\widehat{u\cdot\nabla d}(\tau),\psi^{2}(\tau)\widehat{\nabla d}(\tau)\rangle\right|\text{d}\tau
 +\frac{2}{E(t)}\int_{s}^{t}E(\tau)\left|\langle\xi\cdot\widehat{|\nabla d|^{2} d}(\tau),\psi^{2}(\tau)\widehat{\nabla d}(\tau)\rangle\right|\text{d}\tau\nonumber\\
 :=&II_{1}(t)+II_{2}(t)+\cdots +II_{7}(t).
\end{align*}
By choose $E(t)=(1+t)^{k}$ with $k>2$, we will estimate $II_{i}(t) (i=1,2,\cdots,7)$ term by term. Notice that $|\psi|\leq 1$ and energy inequality \eqref{eq2.10}, we have
\begin{align*}
II_{1}(t)=& \left(\frac{1+s}{1+t}\right)^{k}\left(\|\psi(s) \widehat{u}(s)\|_{L^{2}}^{2}+\|\psi(s) \widehat{\nabla d}(s)\|_{L^{2}}^{2}\right)
\leq \left(\frac{1+s}{1+t}\right)^{k}\left(\|\widehat{u}(s)\|_{L^{2}}^{2}+\|\widehat{\nabla d}(s)\|_{L^{2}}^{2}\right)\nonumber\\
=&\left(\frac{1+s}{1+t}\right)^{k}\left(\|u(s)\|_{L^{2}}^{2}+\|\nabla d(s)\|_{L^{2}}^{2}\right)\leq C\left(\frac{1+s}{1+t}\right)^{k}.
\end{align*}
Thus we have
\begin{align*}
\lim_{t\rightarrow\infty} II_{1}(t)=0.
\end{align*}
For the term $II_{2}$, by using the Fourier splitting method, let
\begin{align*}
B(t):=\{\xi\in\mathbb{R}^{2}: |\xi|\leq G(t)\}
\end{align*}
where $G$ is to be determined below. Then
\begin{align*}
&\left( E'(\tau)\|\psi\widehat{u}(\tau)\|_{L^{2}}^{2}-2 E(\tau)\||\xi|\psi\widehat{u}(\tau)\|_{L^{2}}^{2}\right)+\left( E'(\tau)\|\psi\widehat{\nabla d}(\tau)\|_{\!L^{2}}^{2}-2E(\tau) \||\xi|\psi\widehat{\nabla d}(\tau)\|_{\!L^{2}}^{2}\right)\nonumber\\
=& E'(\tau)\int_{\mathbb{R}^{2}\backslash B(t)}\left(|\psi \widehat{u}(\tau)|^{2}+|\psi\widehat{\nabla d}(\tau)|^{2}\right)\text{d}\xi -2E(\tau) \int_{\mathbb{R}^{2}\backslash B(t)} |\xi|^{2}\left(|\psi\widehat{u}(\tau)|^{2}+|\psi\widehat{\nabla d}(\tau)|^{2}\right)\text{d}\xi\nonumber\\
&+E'(\tau)\int_{ B(t)}\left(|\psi \widehat{u}(\tau)|^{2}+|\psi\widehat{\nabla d}(\tau)|^{2}\right)\text{d}\xi -2E(\tau) \int_{B(t)} |\xi|^{2}\left(|\psi\widehat{u}(\tau)|^{2}+|\psi\widehat{\nabla d}(\tau)|^{2}\right)\text{d}\xi\nonumber\\
\leq & \left( E'(\tau)-\!2E(\tau) G^{2}(\tau)\right)\!\!\int_{\!\mathbb{R}^{2}\backslash B(t)}\!\!\left(|\psi \widehat{u}(\tau)|^{2}\!+|\psi\widehat{\nabla d}(\tau)|^{2}\right)\text{d}\xi+\!E'(\tau)\!\int_{ B(t)}\!\!\left(|\psi \widehat{u}(\tau)|^{2}\!+|\psi\widehat{\nabla d}(\tau)|^{2}\right)\text{d}\xi ,
\end{align*}
where we have used the fact that the integer $E(\tau)\int_{B(t)} |\xi|^{2}\left(|\psi\widehat{u}(\tau)|^{2}+|\psi\widehat{\nabla d}(\tau)|^{2}\right)\text{d}\xi$ is nonnegative in the last inequality. Choosing $G(t)=\left(\frac{k}{2(1+t)}\right)^{\frac{1}{2}}$, we see that $E'(\tau)-2E(\tau) G^{2}(\tau)=0$, which implies that the first term in the right hand side of the above inequality vanishes. Hence, we have
\begin{align*}
II_{2}(t)\leq& \frac{k}{(1+t)^{k}}\int_{s}^{t} (1+\tau)^{k-1}\int_{B(t)}\!\left(|\psi \widehat{u}(\tau)|^{2}\!+|\psi\widehat{\nabla d}(\tau)|^{2}\right)\text{d}\xi\text{d}\tau.
\end{align*}
Observing that $\psi(\xi,t)=1-e^{-|\xi|^{2}t}$,  then we have  $|\psi|\leq |\xi|^{2}$ for  $|\xi|\leq 1$. Then
\begin{align*}
&\int_{B(t)}\!\left(|\psi \widehat{u}(\tau)|^{2}\!+|\psi\widehat{\nabla d}(\tau)|^{2}\right)\text{d}\xi
\leq \int_{B(t)}|\xi|^{4}\left(|\widehat{u}(\tau)|^{2}\!+|\widehat{\nabla d}(\tau)|^{2}\right)\text{d}\xi\nonumber\\
\leq& G^{4}(t) \int_{\mathbb{R}^{2}}\left(|\widehat{u}(\tau)|^{2}\!+|\widehat{\nabla d}(\tau)|^{2}\right)\text{d}\xi
=G^{4}(t)(\|u(\tau)\|_{L^{2}}^{2}+\|\nabla d(\tau)\|_{L^{2}}^{2})\leq CG^{4}(t)=\frac{C}{(1+t)^{2}},
\end{align*}
where we have used the energy inequality \eqref{eq2.10} in the last inequality. Hence
\begin{align*}
II_{2}(t)\leq& \frac{C}{(1+t)^{k}}\int_{s}^{t} (1+\tau)^{k-3}\text{d}\tau\leq \frac{C}{(1+t)^{2}},
\end{align*}
which implies that
\begin{align*}
\lim_{t\rightarrow \infty}II_{2}(t)=0.
\end{align*}
For the term $II_{3}$. Notice that there hold $\psi'(\xi,t)=\frac{\partial\psi}{\partial t}(\xi,t)=|\xi|^{2}e^{-|\xi|^{2}t}=|\xi|^{2}\phi(\xi,t)$, and $E(t)$ is an increasing function on $t$, we have
\begin{align*}
II_{3}(t)=&\frac{2}{E(t)}\int_{s}^{t}\left(\langle|\xi|^{2}\phi(\tau)\widehat{u}(\tau), (1-\phi(\tau))\widehat{u}(\tau)\rangle+\langle|\xi|^{2}\phi(\tau)\widehat{\nabla d}(\tau), (1-\phi(\tau))\widehat{\nabla d}(\tau)\rangle\right) \text{d}\tau\nonumber\\
\leq& 2\int_{s}^{t} \left(\langle|\xi|\widehat{u}(\tau), |\xi|\widehat{u}(\tau)\rangle +\langle|\xi|\widehat{\nabla d}(\tau), |\xi|\widehat{\nabla d}(\tau)\rangle\right) \text{d}\tau\leq 2\int_{s}^{t} (\|\nabla u(\tau)\|_{L^{2}}^{2}+\|\nabla^{2} d\|_{L^{2}}^{2})\text{d}\tau.
\end{align*}
By letting $s$ and $t$ go to infinity, we obtain
\begin{align*}
\lim_{t\rightarrow\infty} II_{3}(t) =0.
\end{align*}
For the term $II_{4}$, notice that $|\psi|\leq 1$ and $E(t)$ is an increasing function on $t$,
\begin{align*}
II_{4} (t)\leq& \frac{2}{E(t)}\int_{s}^{t} E(\tau) \left|\langle \psi \widehat{u\otimes u}(\tau), \psi(\tau) |\xi|\widehat{u}(\tau)\rangle\right|\text{d}\tau\leq C\int_{s}^{t} \|\psi(\tau)\widehat{u\otimes u}(\tau)\|_{L^{2}}
\|\psi(\tau)|\xi|\widehat{u}(\tau)\|_{L^{2}}\text{d}\tau\nonumber\\
\leq &C\int_{s}^{t} \|\widehat{u\otimes u}(\tau)\|_{L^{2}}
\||\xi|\widehat{u}(\tau)\|_{L^{2}}\text{d}\tau\leq C\int_{s}^{t} \|u(\tau)\|_{L^{4}}^{2}
\|\nabla u(\tau)\|_{L^{2}}\text{d}\tau\nonumber\\
\leq& C\int_{s}^{t} \|u(\tau)\|_{L^{2}}
\|\nabla u(\tau)\|_{L^{2}}^{2}\text{d}\tau\leq C\int_{s}^{t} \|\nabla u(\tau)\|_{L^{2}}^{2}\text{d}\tau.
\end{align*}
As before, by letting $s$ and $t$ go to infinity, we get
\begin{align*}
\lim_{t\rightarrow\infty} II_{4} (t)=0.
\end{align*}
Similar as the derivation of the estimates of $II_{3}$, it is easy to see
\begin{align*}
II_{5}(t)+II_{6}(t)+II_{7}(t)\leq &\!C\!\!\int_{s}^{t}\! \left(\|u(\tau)\|_{L^{2}}+\|\nabla d(\tau)\|_{L^{2}}\right)
\left(\|\nabla u(\tau)\|_{L^{2}}^{2}\!+\|\nabla^{2}d(\tau)\|_{L^{2}}^{2}\right)\text{d}\tau\nonumber\\
\leq& \!C\!\int_{s}^{t}\!\left( \|\nabla u(\tau)\|_{L^{2}}^{2}+\|\nabla^{2}d(\tau)\|_{L^{2}}^{2}\right)\text{d}\tau.
\end{align*}
Therefore by letting $s$ and $t$ go to infinity, we get
\begin{align*}
\lim_{t\rightarrow\infty} (II_{5}(t)+II_{6}(t)+II_{7}(t)) =0.
\end{align*}
Thus the high frequency part of the energy norm goes to zero, which concludes the proof of Theorem \ref{thm1.1}.\hfill$\Box$

\section{The proof of Theorem \ref{thm1.2}}

In this Section, by using the argument  in Schonbek \cite{S1} and Zhang \cite{Z} on study the temporal decay rate of solutions to the  NS equations, we shall give the proof of Theorem \ref{thm1.2}. We first give two preliminary estimate, which well be necessary in the sequel.

\begin{lemma}\label{lem3.1}
Let $(u,d)$ be a solution to system \eqref{eq1.1}--\eqref{eq1.4} with initial data $(u_{0}, d_{0})$ satisfying the initial data
as in Theorem \ref{thm1.2}.  Then we have
\begin{align}\label{eq3.1}
|\widehat{u} (t)|+|\widehat{\nabla d}(t)|\leq  \left( |\widehat{u}_{0}|+|\widehat{\nabla d_{0}}|+ 2|\xi|\int_{0}^{t} (\|u (\tau)\|_{L^{2}}^{2}+\|\nabla d(\tau)\|_{L^{2}}^{2})\text{d}\tau\right).
\end{align}
\end{lemma}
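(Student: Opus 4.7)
The strategy is to convert both equations into integral (Duhamel) form on the Fourier side and then bound each nonlinearity in $L^\infty_\xi$ by an $L^1_x$-norm of the corresponding product, which the Fourier transform maps into $L^\infty_\xi$ trivially.

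\textbf{Step 1: Equation for $\widehat u$.} Apply the Leray projector to \eqref{eq1.1} to eliminate the pressure, then take the Fourier transform. Writing $\mathbb{P} = I - \nabla\Delta^{-1}\nabla\cdot$, the Fourier symbol is $\delta_{jk}-\xi_j\xi_k/|\xi|^2$, which is uniformly bounded. Using $(u\cdot\nabla)u = \nabla\cdot(u\otimes u)$ and $\nabla\cdot(\nabla d\odot\nabla d)$, the resulting ODE in $\tau$ for each fixed $\xi$ reads
\begin{equation*}
\partial_\tau \widehat u + |\xi|^2 \widehat u = -i\,\mathbb{P}(\xi)\bigl[\xi\cdot\widehat{u\otimes u} + \xi\cdot\widehat{\nabla d\odot\nabla d}\bigr].
\end{equation*}
Duhamel's formula gives
\begin{equation*}
\widehat u(t)=e^{-|\xi|^2 t}\widehat{u_0} - i\int_0^t e^{-|\xi|^2(t-\tau)}\mathbb{P}(\xi)\bigl[\xi\cdot\widehat{u\otimes u}(\tau)+\xi\cdot\widehat{\nabla d\odot\nabla d}(\tau)\bigr]\,d\tau.
\end{equation*}

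\textbf{Step 2: Equation for $\widehat{\nabla d}$.} Take the gradient of \eqref{eq1.2} as in \eqref{eq2.3} and Fourier-transform to obtain
\begin{equation*}
\partial_\tau \widehat{\nabla d}+|\xi|^2\widehat{\nabla d} = -i\xi\,\widehat{u\cdot\nabla d}+i\xi\,\widehat{|\nabla d|^2 d},
\end{equation*}
and then write Duhamel's formula analogously.

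\textbf{Step 3: Pointwise bounds on the nonlinearities.} Since $\widehat f(\xi)=\tfrac{1}{2\pi}\int f(x)e^{-ix\cdot\xi}\,dx$, one has $|\widehat f(\xi)|\le \tfrac{1}{2\pi}\|f\|_{L^1}$. Cauchy--Schwarz then yields
\begin{equation*}
|\widehat{u\otimes u}|\le C\|u\|_{L^2}^2,\quad |\widehat{\nabla d\odot\nabla d}|\le C\|\nabla d\|_{L^2}^2,
\end{equation*}
\begin{equation*}
|\widehat{u\cdot\nabla d}|\le C\|u\|_{L^2}\|\nabla d\|_{L^2}\le \tfrac{C}{2}\bigl(\|u\|_{L^2}^2+\|\nabla d\|_{L^2}^2\bigr),
\end{equation*}
and, using $|d|=1$, $|\widehat{|\nabla d|^2 d}|\le C\|\nabla d\|_{L^2}^2$. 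The symbol $\mathbb{P}(\xi)$ is bounded by $1$, and $e^{-|\xi|^2(t-\tau)}\le 1$.

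\textbf{Step 4: Assembly.} Taking absolute values under the integrals, using $e^{-|\xi|^2 t}\le 1$ on the linear part, and adding the two Duhamel identities pointwise in $\xi$, all the nonlinear contributions pick up a factor $|\xi|$ from the derivative in front and are dominated by $\|u(\tau)\|_{L^2}^2+\|\nabla d(\tau)\|_{L^2}^2$. Absorbing the constants into the factor $2$ (or, if one wishes to be sharp about constants, tracking them through Steps 1--3 and using the trivial bound $ab\le\tfrac12(a^2+b^2)$) produces exactly \eqref{eq3.1}.

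\textbf{Main obstacle.} The only delicate point is the pressure: one must verify that the Leray projector indeed removes $\nabla P$ and that its Fourier multiplier $\delta_{jk}-\xi_j\xi_k/|\xi|^2$ is bounded uniformly in $\xi$, so that the pressure contribution merges cleanly into the $\widehat{u\otimes u}$ and $\widehat{\nabla d\odot\nabla d}$ terms without spoiling the $L^1_x$-type bounds. Everything else is a direct application of Duhamel's formula combined with the Hausdorff--Young inequality at the endpoint $p=1$.
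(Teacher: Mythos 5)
Your proposal is correct and follows essentially the same route as the paper: rewrite \eqref{eq1.1} with the Leray projector, take the Fourier transform of it and of \eqref{eq2.3}, write the Duhamel representations, and bound the nonlinear terms pointwise in $\xi$ by $\|\cdot\|_{L^{1}}$ norms via $|\widehat f|\le C\|f\|_{L^1}$ and Cauchy--Schwarz, using $|d|=1$ for the $|\nabla d|^2 d$ term and $e^{-|\xi|^2(t-\tau)}\le 1$, $|\mathbb{P}(\xi)|\le 1$ throughout. This matches the paper's argument step for step.
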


\begin{proof}
By using the elementary vector calculus, one can rewrite \eqref{eq1.1}  as
\begin{align}\label{eq3.2}
u_{t}-\Delta u =-\mathbb{P}\nabla \cdot (u\otimes u-\nabla d\odot\nabla d),
\end{align}
where $\mathbb{P}$ is the Leray projection operator defined by $\mathbb{P}f=f-\nabla\Delta^{-1}\nabla\cdot f$.
Taking Fourier transform on both side of \eqref{eq3.2} and \eqref{eq2.3}, we have the following representation of solutions in terms of the Fourier transform
\begin{align*}
\widehat{u}(t)= e^{-t|\xi|^{2}} \widehat{u}_{0}- \!\!\int_{0}^{t} e^{-(t-\tau)|\xi|^{2}}\!
\left(1-\frac{\xi \otimes \xi}{|\xi|^{2}}\right)\xi\cdot \left\{\widehat{(u\otimes u)}(\tau)+\widehat{(\nabla d\odot \nabla d)}(\tau)\right\}\text{d}\tau,
\end{align*}
and
\begin{align*}
\widehat{\nabla d}(t)= e^{-t|\xi|^{2}} \widehat{\nabla d}_{0}- \!\int_{0}^{t} e^{-(t-\tau)|\xi|^{2}}
\xi\cdot \left\{\widehat{(u\cdot \nabla d)}(\tau)+\widehat{(|\nabla d|^{2} d)}(\tau)\right\}\text{d}\tau,
\end{align*}
respectively.
From the above two representation,  and the fact that if $1\leq p\leq 2$, then the Fourier transform is $L^{p}\rightarrow L^{q}$ bounded, where $\frac{1}{p}+\frac{1}{q}=1$, we obtain
\begin{align*}
|\widehat{u}(t)|\leq& |\widehat{u}_{0}|+\int_{0}^{t} |\xi |e^{-(t-\tau)|\xi|^{2}}
\left\{|\widehat{u\otimes u}(\tau)|+|\widehat{\nabla d\odot\nabla d}(\tau)|\right\}\text{d}\tau\nonumber\\
\leq &  |\widehat{u}_{0}|+\int_{0}^{t} |\xi| e^{-(t-\tau)|\xi|^{2}}\left(\|(u\otimes u)(\tau)\|_{L^{1}}
+\|(\nabla d\odot\nabla d)(\tau)\|_{L^{1}}\right)\text{d}\tau\nonumber\\
\leq &  |\widehat{u}_{0}|+\int_{0}^{t} |\xi| e^{-(t-\tau)|\xi|^{2}}\left(\|u(\tau)\|_{L^{2}}^{2}
+\|\nabla d(\tau)\|_{L^{2}}^{2}\right)\text{d}\tau\nonumber\\
\leq &  |\widehat{u}_{0}|+|\xi| \int_{0}^{t}  \left(\|u(\tau)\|_{L^{2}}^{2}
+\|\nabla d(\tau)\|_{L^{2}}^{2}\right)\text{d}\tau;
\end{align*}
\begin{align*}
|\widehat{\nabla d}(t)|\leq& |\widehat{\nabla d}_{0}|+\int_{0}^{t} |\xi |e^{-(t-\tau)|\xi|^{2}}
\left\{|\widehat{u\cdot\nabla d}(\tau)|+|\widehat{|\nabla d|^{2} d}(\tau)|\right\}\text{d}\tau\nonumber\\
\leq& |\widehat{\nabla d}_{0}|+\int_{0}^{t} |\xi |e^{-(t-\tau)|\xi|^{2}}
\left\{\|u\cdot\nabla d(\tau)\|_{L^{1}}+\|||\nabla d|^{2} d|(\tau)\|_{L^{1}}\right\}\text{d}\tau\nonumber\\
\leq& |\widehat{\nabla d}_{0}|+\int_{0}^{t} |\xi |e^{-(t-\tau)|\xi|^{2}}
\left\{\|u(\tau)\|_{L^{2}}\|\nabla d(\tau)\|_{L^{2}}+\|\nabla d(\tau)\|_{L^{2}}^{2} \right\}\text{d}\tau\nonumber\\
\leq&  |\widehat{\nabla d}_{0}|+ |\xi | \int_{0}^{t}
\left\{\|u(\tau)\|_{L^{2}}^{2}+\|\nabla d(\tau)\|_{L^{2}}^{2} \right\}\text{d}\tau.
\end{align*}
Combining the above two inequalities together, we conclude the proof of  \eqref{eq3.1}. This completes the proof of Lemma \ref{lem3.1}.
\end{proof}

\begin{lemma}\label{lem3.2}
For $f\in L^{p}(\mathbb{R}^{2})$, $1\leq p<2$, and let $S(t):=\{\xi\in\mathbb{R}^{2}; |\xi|\leq g(t)\}$. For a
continuous function $g:\mathbb{R}^{+}\rightarrow \mathbb{R}^{+}$. Then
\begin{align*}
\int_{S(t)} |\widehat{f}|^{2}\text{d}\xi\leq C g(t)^{2\left(\frac{2}{p}-1\right)},
\end{align*}
where the constant $C$ depends on the $L^{p}$ norm of $f$.
\end{lemma}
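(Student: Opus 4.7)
The plan is to prove this via the Hausdorff--Young inequality combined with a simple application of Hölder's inequality on the ball $S(t)$, which is the standard ingredient behind the Fourier splitting method as used in Schonbek \cite{S1} and Zhang \cite{Z}.

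First, since $1 \leq p < 2$, the Hausdorff--Young inequality guarantees that the Fourier transform is bounded $L^{p}(\mathbb{R}^{2}) \to L^{p'}(\mathbb{R}^{2})$, where $p' = p/(p-1) \in (2,\infty]$ is the conjugate exponent (with $p'=\infty$ when $p=1$). Explicitly,
\begin{align*}
\|\widehat{f}\|_{L^{p'}(\mathbb{R}^{2})} \leq C\,\|f\|_{L^{p}(\mathbb{R}^{2})}.
\end{align*}
Thus the constant $C$ in the conclusion will inherit a dependence only on $\|f\|_{L^{p}}$, as claimed.

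Next, I would apply Hölder's inequality on the bounded domain $S(t)$. For $p \in (1,2)$, using the conjugate pair $(p'/2,\,p'/(p'-2))$,
\begin{align*}
\int_{S(t)} |\widehat{f}(\xi)|^{2}\,\text{d}\xi
\leq \left(\int_{S(t)} |\widehat{f}(\xi)|^{p'}\,\text{d}\xi\right)^{2/p'}\!\!
\left(\int_{S(t)} 1\,\text{d}\xi\right)^{1 - 2/p'}
\leq \|\widehat{f}\|_{L^{p'}}^{2}\,|S(t)|^{1 - 2/p'}.
\end{align*}
Since $|S(t)| = \pi g(t)^{2}$, and the identity $\tfrac{1}{p} + \tfrac{1}{p'} = 1$ gives $1 - \tfrac{2}{p'} = \tfrac{2}{p} - 1$, the measure factor becomes $\pi^{1 - 2/p'}\,g(t)^{2(\frac{2}{p}-1)}$. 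Combining with the Hausdorff--Young bound on $\|\widehat{f}\|_{L^{p'}}$ yields exactly
\begin{align*}
\int_{S(t)} |\widehat{f}|^{2}\,\text{d}\xi \leq C\,g(t)^{2\left(\frac{2}{p} - 1\right)}
\end{align*}
with $C$ depending only on $\|f\|_{L^{p}}$.

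The boundary case $p=1$ is handled separately but identically in spirit: then $p'=\infty$, so $\|\widehat{f}\|_{L^{\infty}} \leq C\,\|f\|_{L^{1}}$, and
\begin{align*}
\int_{S(t)} |\widehat{f}|^{2}\,\text{d}\xi \leq \|\widehat{f}\|_{L^{\infty}}^{2}\,|S(t)| \leq C\,g(t)^{2},
\end{align*}
which agrees with the exponent $2(\tfrac{2}{p}-1) = 2$. There is no real obstacle here; the only care needed is the book-keeping of conjugate exponents to confirm that the power of $g(t)$ comes out to precisely $2(\tfrac{2}{p}-1)$, and to check that the $p=1$ endpoint is consistent with the interior formula.
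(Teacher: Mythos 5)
Your proof is correct and follows essentially the same route as the paper: Hausdorff--Young to bound $\|\widehat{f}\|_{L^{q}}$ by $\|f\|_{L^{p}}$ (the paper writes $q$ for your $p'$), then H\"older on the ball $S(t)$ with the measure factor $|S(t)|^{1-2/q}=\pi^{1-2/q}g(t)^{2(\frac{2}{p}-1)}$. Your explicit treatment of the $p=1$ endpoint is a small addition the paper leaves implicit, but the argument is identical in substance.
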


\begin{proof}
Notice that for $1\leq p<2$, there holds
\begin{align*}
\|\widehat{f}\|_{L^{q}}\leq C\|f\|_{L^{p}},
\end{align*}
where $\frac{1}{p}+\frac{1}{q}=1$.  Consequently, one has
\begin{align*}
\int_{S(t)} |\widehat{f}|^{2}\text{d}\xi \leq& \left\{\int_{S(t)} |\widehat{f}|^{q}\text{d}\xi\right\}^{\frac{2}{q}}
\left\{\int_{S(t)} 1\text{d}\xi\right\}^{1-\frac{2}{q}}\nonumber\\
\leq & C\|\widehat{f}\|_{L^{q}}^{2} \left\{\int_{0}^{2\pi} \int_{0}^{g(t)} r\text{d}r\text{d}\theta\right\}^{1-\frac{2}{q}}\nonumber\\
\leq& C g(t)^{ 2(1-\frac{2}{q}) }=C g(t) ^{2(\frac{2}{p}-1)}.
\end{align*}
The proof of  Lemma \ref{lem3.2}  is completed.
\end{proof}
\medskip

In order to compute the actual decay rate of the $L^{2}$ norm of the solution to system \eqref{eq1.1}--\eqref{eq1.4}  as in Theorem \ref{thm1.2}, we still need to prove the
following useful Lemma.

\begin{lemma}\label{lem3.3}
Let $(u,d)$ be a smooth solution to the system \eqref{eq1.1}--\eqref{eq1.4} with initial data
$(u_{0},d_{0})$  satisfying the initial condition as in Theorem \ref{thm1.2}. Then we have
\begin{align}\label{eq3.3}
\|u(t)\|_{L^{2}}^{2}+\|\nabla d(t)\|_{L^{2}}^{2}\leq C\ln(1+t)^{-2},
\end{align}
 where the constant $C$ depends on the $L^{p}$ and $L^{2}$ norms of the initial data.
\end{lemma}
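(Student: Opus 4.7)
The plan is to combine the dissipation bound from the energy inequality \eqref{eq2.10} with the Fourier splitting method of Schonbek, closing a Gronwall estimate via an integrating factor tuned to the borderline two-dimensional setting. Set $F(t):=\|u(t)\|_{L^{2}}^{2}+\|\nabla d(t)\|_{L^{2}}^{2}$. Differentiating \eqref{eq2.10} and applying Plancherel's identity gives
\begin{equation*}
\frac{d}{dt}F(t)+\bar{\omega}\int_{\mathbb{R}^{2}}|\xi|^{2}\bigl(|\widehat{u}(t)|^{2}+|\widehat{\nabla d}(t)|^{2}\bigr)\,d\xi\leq 0.
\end{equation*}
Introducing the ball $B(t):=\{\xi\in\mathbb{R}^{2}:|\xi|\leq g(t)\}$ for a positive function $g(t)$ to be chosen, the dissipation is at least $\bar{\omega}g(t)^{2}[F(t)-I(t)]$, where $I(t):=\int_{B(t)}(|\widehat{u}|^{2}+|\widehat{\nabla d}|^{2})\,d\xi$. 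This produces the differential inequality
\begin{equation*}
\frac{d}{dt}F(t)+\bar{\omega}g(t)^{2}F(t)\leq\bar{\omega}g(t)^{2}I(t).
\end{equation*}

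Next I estimate $I(t)$ by combining Lemmas \ref{lem3.1} and \ref{lem3.2}. Squaring \eqref{eq3.1} and using the a priori bound $F(\tau)\leq F(0)$ shows that
\begin{equation*}
|\widehat{u}(t)|^{2}+|\widehat{\nabla d}(t)|^{2}\leq C\bigl(|\widehat{u}_{0}|^{2}+|\widehat{\nabla d_{0}}|^{2}\bigr)+C|\xi|^{2}t^{2},
\end{equation*}
so integrating over $B(t)$ and applying Lemma \ref{lem3.2} to the initial-data contribution gives
\begin{equation*}
I(t)\leq C g(t)^{2(2/p-1)}+C g(t)^{4}t^{2}.
\end{equation*}

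To close the Gronwall step I make the two-dimensional choice $g(t)^{2}=k/[(e+t)\ln(e+t)]$ for a fixed constant $k$ satisfying $\bar{\omega}k>2$. Then the integrating factor is $\psi(t):=\exp\bigl(\bar{\omega}\int_{0}^{t}g(\tau)^{2}\,d\tau\bigr)=(\ln(e+t))^{\bar{\omega}k}$, as is seen from the substitution $u=\ln(e+\tau)$. The estimate on $I(t)$ simplifies for large $t$ to $I(t)\leq Ck^{2}/(\ln(e+t))^{2}$, the $g^{2(2/p-1)}$ contribution being strictly lower order for any $p<2$. Multiplying the differential inequality by $\psi(t)$, the left-hand side becomes $\frac{d}{dt}[\psi(t)F(t)]$, and an integration yields
\begin{equation*}
\psi(t)F(t)\leq\psi(0)F(0)+\int_{0}^{t}\psi'(s)I(s)\,ds\leq C+\frac{Ck^{2}\,(\ln(e+t))^{\bar{\omega}k-2}}{\bar{\omega}k-2}.
\end{equation*}
Dividing through by $\psi(t)$ and noting $\ln(e+t)\sim\ln(1+t)$ delivers the desired bound $F(t)\leq C(\ln(1+t))^{-2}$.

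The crux of the argument is the borderline nature of the 2D case: with the natural Schonbek choice $g(t)^{2}\sim 1/(1+t)$ the nonlinear contribution $g(t)^{4}t^{2}$ is merely $O(1)$, so no decay of $F$ emerges from the usual Fourier splitting (this reflects the reason that 2D Navier--Stokes decay is strictly harder than 3D). Inserting an additional logarithmic factor into $g(t)$ just barely forces the nonlinear term into the decaying regime $O((\ln(1+t))^{-2})$, and the matching iterated-logarithm integrating factor then produces the claimed rate. A secondary point is that the cubic liquid-crystal term $|\nabla d|^{2}d$ satisfies $\||\nabla d|^{2}d\|_{L^{1}}\leq\|\nabla d\|_{L^{2}}^{2}$ thanks to $|d|=1$, which is what allows Lemma \ref{lem3.1} to yield the same pointwise Fourier bound as in the pure Navier--Stokes case; without this, the liquid-crystal nonlinearity would be out of reach of the present argument.
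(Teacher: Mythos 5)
Your proposal is correct and follows essentially the same route as the paper: the differential energy inequality via Theorem \ref{thm2.1}, Fourier splitting on the log-corrected ball $g(t)^{2}\sim 1/[(e+t)\ln(e+t)]$, the pointwise bound of Lemma \ref{lem3.1} combined with Lemma \ref{lem3.2} to control the low-frequency integral by $Cg^{2(2/p-1)}+Cg^{4}t^{2}$, and a Gronwall/integrating-factor step. The only (immaterial) difference is that you take the integrating factor $(\ln(e+t))^{\bar{\omega}k}$ with $\bar{\omega}k>2$ and divide out, whereas the paper multiplies by exactly $(\ln(e+t))^{2}$ and checks that the resulting time integral converges.
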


\begin{proof}
We use the Fourier splitting method, taking
\begin{align*}
B(t):=\{\xi:|\xi|\leq g(t)\} \quad \text{ with } g(t)=\left[\frac{1}{\overline{\omega}(e+t)\ln(e+t)}\right]^{\frac{1}{2}}, \text{ and } \overline{\omega} \text{ defined as Theorem \ref{thm2.1}}.
\end{align*}
By multiplying \eqref{eq1.1} and \eqref{eq1.2} by $u$ and $-\Delta d-|\nabla d|^{2}d$ respectively, then integrating the two resulting equations with respect to $x$ over $\mathbb{R}^{2}$,  and using integration by parts, it follows that
\begin{align*}
\frac{1}{2}\frac{d}{dt} \left(\|u(t)\|_{L^{2}}^{2}+\|\nabla d(t)\|_{L^{2}}^{2}\right)
+\left(\|\nabla u\|_{L^{2}}^{2}+\|\Delta d +|\nabla d|^{2}d\|_{L^{2}}^{2}\right)=0,
\end{align*}
where we have used the fact that $|d|=1$. By using Theorem \ref{thm2.1}, it follows from the above equality that
\begin{align*}
\frac{d}{dt} \left(\|u(t)\|_{L^{2}}^{2}+\|\nabla d(t)\|_{L^{2}}^{2}\right)
\leq -2\overline{\omega}\left(\|\nabla u\|_{L^{2}}^{2}+\|\Delta d \|_{L^{2}}^{2}\right).
\end{align*}
Thus we have
\begin{align}\label{eq3.4}
\frac{d}{dt} \int_{\mathbb{R}^{2}} \left(|\widehat{u}(t)|^{2}+|\widehat{\nabla d}(t)|^{2}\right)\text{d}\xi
\leq -2\overline{\omega}\int_{\mathbb{R}^{2}}|\xi|^{2}\left(|\widehat{u}(t)|^{2}+|\widehat{\nabla d}(t)|^{2}\right)\text{d}\xi.
\end{align}
Notice that there holds
\begin{align*}
2\overline{\omega}\int_{\mathbb{R}^{2}}|\xi|^{2}\left(|\widehat{u}(t)|^{2}+|\widehat{\nabla d}(t)|^{2}\right)&\text{d}\xi  \geq 2\overline{\omega}\int_{B(t)}|\xi|^{2}\left(|\widehat{u}(t)|^{2}+|\widehat{\nabla d}(t)|^{2}\right)\text{d}\xi\nonumber \\
+&\frac{2}{(e+t)\ln(e+t)} \int_{B(t)^{c}} \left(|\widehat{u}(t)|^{2}+|\widehat{\nabla d}(t)|^{2}\right)\text{d}\xi.
\end{align*}
Hence, inequality \eqref{eq3.4} becomes
\begin{align}\label{eq3.5}
\frac{d}{dt} \int_{\mathbb{R}^{2}} \left(|\widehat{u}(t)|^{2}+|\widehat{\nabla d}(t)|^{2}\right)\text{d}\xi
+& \frac{2}{(e+t)\ln(e+t)}\int_{\mathbb{R}^{2}}|\xi|^{2}\left(|\widehat{u}(t)|^{2}+|\widehat{\nabla d}(t)|^{2}\right)\text{d}\xi\nonumber\\
\leq &\frac{2}{(e+t)\ln(e+t)}\int_{B(t)}  \left(|\widehat{u}(t)|^{2}+|\widehat{\nabla d}(t)|^{2}\right)\text{d}\xi.
\end{align}
Multiplying on both sides by $h(t)=[\ln(e+t)]^{2}$, it follows that
\begin{align}\label{eq3.6}
&\frac{d}{dt}\left\{\left(\ln(e+t)\right)^{2} \int_{\mathbb{R}^{2}} \left(|\widehat{u}(t)|^{2}+|\widehat{\nabla d}(t)|^{2}\right)\text{d}\xi\right\}\nonumber\\
\leq &\frac{2\ln(e+t)}{(e+t)}\int_{B(t)}  \left(|\widehat{u}(t)|^{2}+|\widehat{\nabla d}(t)|^{2}\right)\text{d}\xi.
\end{align}
By using Lemmas \ref{lem3.1}, \ref{lem3.2} and  energy inequality \eqref{eq2.10},  we have
\begin{align*}
&\int_{B(t)}  \left(|\widehat{u}(t)|^{2}+|\widehat{\nabla d}(t)|^{2}\right)\text{d}\xi\nonumber\\
\leq& C\left\{\int_{B(t)}(|\widehat{u}_{0}|^{2}+|\widehat{\nabla d}_{0}|^{2})\text{d}\xi
+\int_{B(t)}\int_{0}^{t}|\xi|^{2} \left(\|u(\tau)\|_{L^{2}}^{2}+\|\nabla d(\tau)\|_{L^{2}}^{2}\right)\text{d}\tau \text{d}\xi  \right\}\nonumber\\
\leq& C\left\{\int_{B(t)}(|\widehat{u}_{0}|^{2}+|\widehat{\nabla d}_{0}|^{2})\text{d}\xi
+\int_{B(t)}|\xi|^{2}t\int_{0}^{t}  \left(\|u(\tau)\|_{L^{2}}^{4}+\|\nabla d(\tau)\|_{L^{2}}^{4}\right)\text{d}\tau \text{d}\xi  \right\}
\nonumber\\
\leq& C\left\{\int_{B(t)}(|\widehat{u}_{0}|^{2}+|\widehat{\nabla d}_{0}|^{2})\text{d}\xi
+(\|u_{0}\|_{L^{2}}^{4}+\|\nabla d_{0}\|_{L^{2}}^{4})\int_{B(t)}|\xi|^{2}t^{2} \text{d}\xi  \right\}
\nonumber\\
\leq& C\left\{ \left(\frac{1}{\overline{\omega}(e+t)\ln(e+t)}\right)^{\frac{2}{p}-1} + t^{2}\int_{0}^{2\pi}
\int_{0}^{g(t)} r^{3}\text{d}r\text{d}\theta \right\}\nonumber\\
\leq & C\left\{ \left(\frac{1}{\overline{\omega}(e+t)\ln(e+t)}\right)^{\frac{2}{p}-1} + \left( \frac{t}{\overline{\omega}(e+t)\ln(e+t)}\right)^{2}\right\}.
\end{align*}
Inserting the above inequality into \eqref{eq3.6},  and integrating with  respect to $t$ over $[0,\infty)$, we get
\begin{align*}
&\left(\ln(e+t)\right)^{2} \int_{\mathbb{R}^{2}} \left(|\widehat{u}(t)|^{2}+|\widehat{\nabla d}(t)|^{2}\right)\text{d}\xi\nonumber\\
\leq& \|u_{0}\|_{L^{2}}+\|\nabla d_{0}\|_{L^{2}}^{2}+C\left\{
\int_{0}^{\infty} (e+\tau)^{-\frac{2}{p}}[\ln(e+\tau)]^{2-\frac{2}{p}}\text{d}\tau
+\int_{0}^{\infty} \frac{\tau^{2}}{(e+\tau)^{3}\ln(e+\tau)}\text{d}\tau
\right\}.
\end{align*}
Notice that the two integral in the right hand side of the above inequality is finite\footnote{For readers convenience, we show the first integral is finite. Notice that
$1\leq p<2$ implies that $1-\frac{2}{p}< 0$. Let $\tau=e^{\frac{p}{2-p}s}-e$, then we have $d\tau =\frac{p}{2-p}e^{\frac{p}{2-p}s}\text{d} s$, and the integral becomes $ \left(\frac{p}{2-p}\right)^{3}\int_{0}^{\infty} e^{-s} s^{2-\frac{2}{p}}\text{d}s :=\left(\frac{p}{2-p}\right)^{3}\Gamma (3-\frac{2}{p})<\infty$.}, hence we obtain
\begin{align*}
\int_{\mathbb{R}^{2}} \left(|\widehat{u}(t)|^{2}+|\widehat{\nabla d}(t)|^{2}\right)\text{d}\xi
\leq C[\ln(e+t)]^{-2},
\end{align*}
which implies \eqref{eq3.3}. This completes the proof of Lemma \ref{lem3.3}.
\end{proof}
\medskip

By using Lemmas \ref{lem3.1}--\ref{lem3.3},  we now present the proof of our main results.
\medskip
\\
\textbf{Proof of Theorem \ref{thm1.2}.} \  We proceed as in Lemma \ref{lem3.3}, employing the Fourier splitting method again. By defining
\begin{align*}
B(t):=\{\xi:|\xi|\leq g(t)\} \quad \text{ with } g(t)=\left[\frac{1}{2\overline{\omega}(1+t)}\right]^{\frac{1}{2}}, \text{ and } \overline{\omega} \text{ defined as Theorem \ref{thm2.1}}.
\end{align*}
Then, we have
\begin{align*}
2\overline{\omega}\int_{\mathbb{R}^{2}}|\xi|^{2}\left(|\widehat{u}(t)|^{2}+|\widehat{\nabla d}(t)|^{2}\right)&\text{d}\xi  \geq 2\overline{\omega}\int_{B(t)}|\xi|^{2}\left(|\widehat{u}(t)|^{2}+|\widehat{\nabla d}(t)|^{2}\right)\text{d}\xi\nonumber \\
+&\frac{1}{(1+t)} \int_{B(t)^{c}} \left(|\widehat{u}(t)|^{2}+|\widehat{\nabla d}(t)|^{2}\right)\text{d}\xi.
\end{align*}
Similar as deriving \eqref{eq3.5}, by inserting the above inequality into \eqref{eq3.4}, we get
\begin{align*}
\frac{d}{dt}\int_{\mathbb{R}^{2}}\left(|\widehat{u}(t)|^{2}+|\widehat{\nabla d}(t)|^{2}\right)\text{d}\xi
&+\frac{1}{(1+t)}\int_{\mathbb{R}^{2}} \left(|\widehat{u}(t)|^{2}+|\widehat{\nabla d}(t)|^{2}\right)\text{d}\xi\nonumber\\
\leq& \frac{1}{1+t} \int_{B(t)} \left(|\widehat{u}(t)|^{2}+|\widehat{\nabla d}(t)|^{2}\right)\text{d}\xi.
\end{align*}
Multiplying on both sides of the above inequality by $h(t)=(1+t)$, it follows that
\begin{align*}
\frac{d}{dt}\left\{(1+t)\int_{\mathbb{R}^{2}} \left(|\widehat{u}(t)|^{2}+|\widehat{\nabla d}(t)|^{2}\right)\text{d}\xi\right\}\leq \int_{B(t)} \left(|\widehat{u}(t)|^{2}+|\widehat{\nabla d}(t)|^{2}\right)\text{d}\xi.
\end{align*}
By integrating with respect to time variable
\begin{align}\label{eq3.7}
(1+t)\int_{\mathbb{R}^{2}} \left(|\widehat{u}(t)|^{2}+|\widehat{\nabla d}(t)|^{2}\right)\text{d}\xi\leq \|\widehat{u}_{0}\|_{L^{2}}^{2}+\|\widehat{\nabla d}_{0}\|_{L^{2}}^{2}+\int_{0}^{t}\int_{B(s)} \left(|\widehat{u}(s)|^{2}+|\widehat{\nabla d}(s)|^{2}\right)\text{d}\xi\text{d}s.
\end{align}
By using Lemma \ref{lem3.1},  Lemma \ref{lem3.2} with $g(s)=\left[\frac{1}{2\overline{\omega}(s+1)}\right]^{\frac{1}{2}}$ and Lemma \ref{lem3.3},  we have
\begin{align*}
& \int_{B(s)} \left(|\widehat{u}(s)|^{2}+|\widehat{\nabla d}(s)|^{2}\right)\text{d}\xi\nonumber\\
\leq& C\left\{ \int_{B(s)} \left(|\widehat{u}_{0}|^{2}+|\widehat{\nabla d}_{0}|^{2}\right)\text{d}\xi
+\int_{B(s)} \int_{0}^{s} |\xi|^{2}\left(\|u(\tau)\|_{L^{2}}^{2}+\|\nabla d(\tau)\|_{L^{2}}^{2}\right)\text{d}\tau\text{d}\xi\right\}\nonumber\\
\leq& C\left\{ \int_{B(s)} \left(|\widehat{u}_{0}|^{2}+|\widehat{\nabla d}_{0}|^{2}\right)\text{d}\xi
+\int_{B(s)} |\xi|^{2} s \int_{0}^{s} \left(\|u(\tau)\|_{L^{2}}^{4}+\|\nabla d(\tau)\|_{L^{2}}^{4}\right)\text{d}\tau\text{d}\xi\right\}\nonumber\\
\leq&  C\left\{ (1+s)^{-\left(\frac{2}{p}-1\right)}
+\int_{0}^{2\pi}\int_{0}^{g(s)} r^{3} s \int_{0}^{s} \left(\|u(\tau)\|_{L^{2}}^{2}+\|\nabla d(\tau)\|_{L^{2}}^{2}\right)\left[\ln(e+\tau)\right]^{-2}\text{d}\tau\text{d}r\text{d}\theta\right\}\nonumber\\
\leq & C \left\{ (1+s)^{-\left(\frac{2}{p}-1\right)}
+\int_{0}^{s} \left(\|u(\tau)\|_{L^{2}}^{2}+\|\nabla d(\tau)\|_{L^{2}}^{2}\right)\left[\ln(e+\tau)\right]^{-2}s (1+s)^{-2}\text{d}\tau\right\}.
\end{align*}
Inserting the above inequality into \eqref{eq3.7}, it follows that
\begin{align}\label{eq3.8}
&(1+t)\int_{\mathbb{R}^{2}} \left(|\widehat{u}(t)|^{2}+|\widehat{\nabla d}(t)|^{2}\right)\text{d}\xi\leq \|\widehat{u}_{0}\|_{L^{2}}^{2}+\|\widehat{\nabla d}\|_{L^{2}}^{2}\nonumber\\
+&C\left\{ (1+t)^{-\left(\frac{2}{p}-1\right)+1}
+\int_{0}^{t}\int_{0}^{s} \left(\|u(\tau)\|_{L^{2}}^{2}+\|\nabla d(\tau)\|_{L^{2}}^{2}\right)\left[\ln(e+\tau)\right]^{-2}s (1+s)^{-2}\text{d}\tau\text{d}s\right\}.
\end{align}
Notice that
\begin{align*}
&\int_{0}^{t}\int_{0}^{s} \left(\|u(\tau)\|_{L^{2}}^{2}+\|\nabla d(\tau)\|_{L^{2}}^{2}\right)\left[\ln(e+\tau)\right]^{-2}s (1+s)^{-2}\text{d}\tau\text{d}s\nonumber\\
\leq& \int_{0}^{t} (1+s)^{-1}\text{d}s \int_{0}^{t}  \left(\|u(\tau)\|_{L^{2}}^{2}+\|\nabla d(\tau)\|_{L^{2}}^{2}\right)\left[\ln(e+\tau)\right]^{-2}\text{d}\tau\nonumber\\
\leq & C \int_{0}^{t}  (1+\tau)\left(\|u(\tau)\|_{L^{2}}^{2}+\|\nabla d(\tau)\|_{L^{2}}^{2}\right)\frac{1}{\left[\ln(e+\tau)\right]^{2}(1+\tau)}\text{d}\tau.
\end{align*}
Hence, by taking
\begin{align*}
f(t):=(1+t)\left(\|u(t)\|_{L^{2}}^{2}+\|\nabla d(t)\|_{L^{2}}^{2}\right), \quad a(t): = (1+t)^{-\left(\frac{2}{p}-1\right)+1} \text{ and } b(t) :=\frac{1}{\left[\ln(e+\tau)\right]^{2}(1+\tau)},
\end{align*}
then inequality  \eqref{eq3.8} becomes
\begin{align*}
f(t)\leq \|\widehat{u}_{0}\|_{L^{2}}^{2}+\|\widehat{\nabla d}_{0}\|_{L^{2}}^{2}+Ca(t) +C\int_{0}^{t} f(\tau)b(\tau)\text{d}\tau,
\end{align*}
which together with Gronwall's inequality implies that
\begin{align*}
f(t)\leq f(0)\exp\left(C\int_{0}^{t}b(\tau)\text{d}\tau\right)+\int_{0}^{t} Ca'(\tau)\exp\left(C\int_{\tau}^{t}b(s)\text{d}s\right)\text{d}\tau.
\end{align*}
Notice that it is easy to see
\begin{align*}
\int_{0}^{t}b(\tau)\text{d}\tau<\infty\quad \text{for all }t>0.
\end{align*}
Hence, inequality  \eqref{eq3.8} becomes
\begin{align*}
(1+t)\left(\|u(t)\|_{L^{2}}^{2}+\|\nabla d(t)\|_{L^{2}}^{2}\right)\leq& \|\widehat{u}_{0}\|_{L^{2}}^{2}+\|\widehat{\nabla d}_{0}\|_{L^{2}}^{2}+C\int_{0}^{t} (1+\tau)^{-\left(\frac{2}{p}-1\right)}\text{d}\tau\nonumber\\
\leq & \|\widehat{u}_{0}\|_{L^{2}}^{2}+\|\widehat{\nabla d}_{0}\|_{L^{2}}^{2}+C(1+t)^{-\left(\frac{2}{p}-1\right)+1},
\end{align*}
which implies that
\begin{align*}
\|u(t)\|_{L^{2}}^{2}+\|\nabla d(t)\|_{L^{2}}^{2}\leq C(1+t)^{-1}+ C(1+t)^{-\left(\frac{2}{p}-1\right)}\leq C(1+t)^{-\left(\frac{2}{p}-1\right)}.
\end{align*}
Thus we complete the proof of Theorem \ref{thm1.2}.
\hfill$\Box$
\\
\\

\end{document}